\documentclass[a4paper,11pt]{amsart}
\setlength{\textwidth}{16 cm}
\setlength{\oddsidemargin}{-.5cm}
\setlength{\evensidemargin}{-.5cm}

\usepackage{latexsym}
\usepackage{amsmath}
\usepackage{epsfig}
\usepackage{color}
\setcounter{secnumdepth}{3}

\newtheorem{theorem}{Theorem}
\newtheorem{corollary}[theorem]{Corollary}
\newtheorem{definition}{Definition}
\newtheorem{lemma}[theorem]{Lemma}
\newtheorem{proposition}[theorem]{Proposition}

\newcommand{\NN}{{\rm\bf N}}
\newcommand{\ZZ}{{\rm\bf Z}}
\newcommand{\RR}{{\rm\bf R}}
\newcommand{\EU}{{\rm\bf S}}
\newcommand{\ee}{{\rm e}}
\newcommand{\vv}{{\rm\bf v}}
\newcommand{\ww}{{\rm\bf w}}
\newcommand{\In}{{\text{In}}}
\newcommand{\Out}{{\text{Out}}}
\newcommand{\loc}{{\text{loc}}}

\DeclareMathOperator{\Fix}{Fix}

\newcommand{\wsv}{h_\vv}
\newcommand{\wuw}{h_\ww}
\newcommand{\xmsv}{x_\vv}
\newcommand{\ymsv}{M_\vv}
\newcommand{\xmuw}{x_\ww}
\newcommand{\ymuw}{M_\ww}
\newcommand{\hor}{{\mathcal H}}
\newcommand{\quadr}{{\mathcal S}}

\newcommand{\dpt}{\displaystyle}

\newcommand{\zg}[1]{\langle\gamma_{#1}\rangle}
\def\Qed{\hfill\raisebox{.6ex}{\framebox[2.5mm]{}}\medbreak}

\newcounter{lixo}

\begin{document}

\title[Global bifurcations close to symmetry\qquad \today]{Global bifurcations close to symmetry}

\date{\today }

\author[Isabel S. Labouriau
\quad Alexandre A. P. Rodrigues\quad\today]{
Isabel S. Labouriau
\quad Alexandre A. P. Rodrigues}
\address{Centro de Matem\'atica
da Universidade do Porto
\\
 and
Faculdade de
Ci\^encias, Universidade do Porto \\
Rua do Campo Alegre,
687, 4169-007 Porto, Portugal} 
\email{I.S. Labouriau islabour@fc.up.pt \quad  A.A.P. Rodrigues alexandre.rodrigues@fc.up.pt }
 
\thanks{CMUP is supported by Funda\c{c}\~ao para a Ci\^encia e a Tecnologia --- FCT (Portugal)
with national (MEC) and European (FEDER) funds, under the partnership agreement PT2020.
 A.A.P. Rodrigues was supported by the  grant SFRH/BPD/84709/2012 of FCT. Part of this work has been written during A.A.P. Rodrigues stay in Nizhny Novgorod University supported by the grant RNF 14-41-00044}
 
\subjclass[2010]{Primary: 34C28 Secondary: 34C37, 37C29, 37D05, 37G35}
\keywords{Heteroclinic tangencies, Non-hyperbolicity, Symmetry breaking, Global bifurcations, Routes to chaos.}

\begin{abstract}
Heteroclinic  cycles involving two saddle-foci, where the saddle-foci share both invariant manifolds,
occur persistently in some symmetric  differential equations on the 3-dimensional sphere.
We  analyse the dynamics around this type of cycle in
  the case when trajectories near the two equilibria turn in the same direction around a 
1-dimensional connection --- the saddle-foci have the same chirality.
When part of the symmetry is broken, the 2-dimensional invariant manifolds intersect transversely creating a heteroclinic network of Bykov cycles.

  We show that the proximity of symmetry
creates heteroclinic tangencies that coexist with hyperbolic dynamics.  
There are $n$-pulse heteroclinic tangencies --- trajectories  that follow the original cycle $n$ times around before they arrive at the other node. Each $n$-pulse heteroclinic tangency is accumulated by a sequence of $(n+1)$-pulse ones.
This coexists with the suspension of horseshoes defined on an infinite set of disjoint strips, where the first return map is hyperbolic.
We also show how, as the system approaches  full symmetry, the suspended horseshoes are destroyed, creating  regions with infinitely many attracting periodic solutions.
%
%
%
%
\end{abstract}

\maketitle


\section{Introduction}
A Bykov cycle is a heteroclinic 
cycle between two hyperbolic saddle-foci of different Morse index, where one of the connections is transverse and the other is structurally unstable --- see Figure \ref{orientations}.
There are two types of Bykov cycle, depending on the way the flow turns around the two saddle-foci, that determine  the \emph{chirality} of the cycle. Here we study the non-wandering dynamics in the neighbourhood of a Bykov cycle where the two nodes have the same chirality.
This is also studied in \cite{LR}, and the case of different chirality is discussed in \cite{LR3}.
A simplified version of the arguments presented here appears in  \cite{LR_proc}.

\begin{figure}
\begin{center}
\includegraphics[scale=.6]{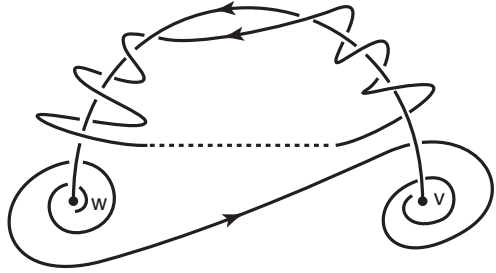}
\end{center}
\caption{\small A Bykov cycle with nodes of the same chirality.
There are two  possibilities for the geometry of the flow around a Bykov cycle  depending on the direction trajectories turn around the connection $[\vv \rightarrow \ww]$. 
We assume here that the nodes have the same chirality:
trajectories turn in the same direction around the connection.
When the endpoints of a nearby trajectory are joined, the closed curve is always linked to the cycle.} 
\label{orientations}
\end{figure}

\subsection{The object of study}
Our starting point is a  fully $(\ZZ_2\times\ZZ_2)$-symmetric 
 differential equation  $\dot{x}=f_0(x)$ in the three-dimensional sphere $\EU^3$
with two saddle-foci that share all the invariant manifolds, of dimensions one and two, both contained in flow-invariant
  submanifolds
 that come from the symmetry.
This forms an attracting heteroclinic network $\Sigma^0$ with a non-empty basin of attraction $V^0$.
 We study the global transition of the dynamics from  this fully symmetric system $\dot{x}=f_0(x)$ to a perturbed system $\dot{x}=f_\lambda(x)$, for a smooth one-parameter family that  breaks  part
  of  the symmetry of the system. 
 For small perturbations the set $V^0$ is still positively invariant.

When $\lambda\neq 0$, the one-dimensional connection persists, due to the remaining symmetry, and the 
two dimensional invariant manifolds intersect transversely, because of the symmetry breaking. 
This gives rise to a network $\Sigma^\lambda$, that consists of a union of Bykov cycles, contained 
in $V^0$.
For partial symmetry-breaking perturbations of $f_0$, we are interested in the dynamics in 
the maximal invariant set contained in $V^0$.
It contains, but does not coincide with, the suspension of horseshoes accumulating on $\Sigma^\lambda$ described in \cite{ACL NONLINEARITY, KLW, LR, Rodrigues2}. 
Here, we show that close to the fully symmetric case it contains infinitely many heteroclinic tangencies.
 Under an additional assumption, we show that $V_0$
 contains attracting limit cycles with long  periods, coexisting with sets with positive entropy.

\subsection{History}
Homoclinic and heteroclinic bifurcations constitute the core of our understanding of complicated recurrent behaviour in dynamical systems.
  This starts with
 Poincar\'e on the late
 $19^{th}$ century, with major subsequent contributions by the schools of Andronov, Shilnikov, Smale and Palis.
 These results rely   on a combination of analytical and geometrical tools used to understand the qualitative behaviour of the dynamics. 

Heteroclinic cycles and networks are flow-invariant sets that can occur robustly in dynamical systems with symmetry, and are frequently associated with intermittent behaviour. 
The rigorous analysis of the  dynamics associated to the structure of the
nonwandering sets close to heteroclinic networks is still a challenge. We refer to \cite{HS} for an overview of heteroclinic bifurcations and for details on the dynamics near different kinds of heteroclinic cycles and networks.

 Bykov cycles have been  found analytically
 in the Lorenz model in \cite{ABS,PY}
  and the nearby dynamics was studied by Bykov in in \cite{Bykov93, Bykov99, Bykov}.
  The point in parameter space where this cycle occurs is called a \emph{T-point}
in \cite{GSpa}.  
Recently, there has been a renewal of interest in this type of heteroclinic bifurcation in the reversible \cite{DIK, DIKS, Lamb2005}, equivariant  \cite{ACL NONLINEARITY,LR, Rodrigues4} and conservative cases \cite{BessaRodrigues}. See also \cite{KLW}. 

The transverse intersection of the two-dimensional invariant manifolds of the two equilibria  implies that the set of  trajectories that remain for all time in a small neighbourhood of the Bykov cycle contains a locally-maximal hyperbolic set admitting a complete description in terms of symbolic dynamics, reminiscent of the results of Shilnikov \cite{Shilnikov67}. 
An obstacle to the global symbolic description  of these trajectories
is the existence of tangencies that lead to the
birth of stable periodic solutions, as described for the homoclinic case in \cite{Afraimovich83, GavS, Newhouse74, Newhouse79, YA}.

All dynamical models with quasi-stochastic attractors were found, either analytically or  by computer simulations, to have tangencies of invariant manifolds  \cite{Afraimovich83, Gonchenko96, Gonchenko2007}. 
 As a rule, the sinks in a quasi-stochastic attractor have very long periods and narrow basins of attraction, and they are hard to observe in applied problems because of the presence of noise \cite{Gonchenko2012}.


Motivated by the analysis of  Lamb \emph{et al} \cite{Lamb2005} in the context of the Michelson system, Bykov cycles have been  considered  by Knobloch {\sl et al} \cite{KLW}.
Using the Lin's method, the authors
 extend the analysis to cycles in spaces of arbitrary dimension, while restricting it
 to trajectories that remain for all time inside a small tubular neighbourhood of the cycle.
 We also refer the reader to \cite{KLW2015}, where the authors consider non-elementary $T$-points in reversible differential equations. The leading eigenvalues at the two equilibria are real and the two-dimensional invariant manifolds meet tangentially. They found chaos in the unfolding of this $T$-point and bifurcations of periodic solutions in the process of annihilation of the shift dynamics.


Bykov cycles  appear in many applications like the Kuramoto-Sivashinsky systems \cite{DIK, Lamb2005}, magnetoconvection \cite{Rodrigues2, Ruck} and travelling waves in reaction-diffusion dynamics \cite{AGH, GH}.

\subsection{Chirality}
  For a Bykov cycle in a 3-dimensional manifold, we say that the nodes have the same chirality if trajectories turn in the same direction around the common 1-dimensional invariant manifold. If they turn in opposite directions we say the nodes have different chirality.
A more formal definition, using links, will be 
 given in Section~\ref{object} below, showing this to be a global topological invariant of the connection that is well defined only in a 3-dimensional ambient space.

These cycles have been studied by different authors who were not aware of the chirality, ignoring what looked like a very small and unimportant choice in local coordinates. 
For instance, Bykov in \cite{Bykov99,Bykov} addresses the case of different chirality without mentioning it explicitly --- see a discussion in Section 7 of \cite{LR3}.
Cycles with the same chirality are treated in \cite{ACL NONLINEARITY,LR} and they occur naturally in reversible differential equations \cite{KLW2015}.
Dynamical features that are irrespective of chirality are described in \cite{KLW}.

Arbitrarily close to Bykov cycles of any chirality there are suspended horseshoes and multi-pulse heteroclinic cycles --- see Theorem~\ref{teorema T-point switching} below.
Around Bykov cycles where the nodes have different chirality, 
heteroclinic tangencies occur generically in trajectories that remain close to the cycle for all time, as shown in  \cite{LR3}.
This is not the case when the nodes have the same chirality, but we show here that heteroclinic tangencies appear when the equations are approaching a more symmetric one.

 \subsection{Symmetry}
 Heteroclinic cycles involving equilibria are   not a generic feature in differential equations,
 but they can be structurally stable in systems which are equivariant under the action of a symmetry group, due to the existence of flow-invariant subspaces \cite{GH}.
 Thus, perturbations that preserve the symmetry will not destroy the cycle.
Explicit examples of equivariant vector fields for which such cycles may be found are reported in \cite{ACL2, ALR, KR, LR3,  MPR,  Rodrigues2, LR2}.  Symmetry, exact or approximate, plays an essential role in the analysis of nonlinear physical phenomena \cite{AGH, GS, Rodrigues2}. 
 It is often incorporated in models either because it occurs approximately in the phenomena being modelled, or because its presence simplifies the analysis.
Since reality has not perfect symmetry, it is desirable to understand the dynamics that are being created under small symmetry breaking perturbations.

 Symmetry plays two roles here. 
First, it creates flow-invariant subspaces where non-transverse heteroclinic connections are persistent, and hence Bykov cycles are robust in this context.
Second,
 we use the proximity of the fully symmetric case to capture more global dynamics.
Symmetry constrains the  geometry of the invariant manifolds of the saddle-foci and allows us some control of their relative positions,
and we find infinitely many heteroclinic tangencies corresponding to trajectories that make an excursion away from the original cycle. 
For Bykov cycles with the same chirality,  tangencies only take place near symmetry.
 
In the analysis of the annihilation of hyperbolic horseshoes associated to tangencies,
on the one hand, the symmetry adds complexity to the problem, because the analysis is not so standard as in \cite{PT, YA}. On the other hand, symmetry simplifies the analytic expression of the return map. 
 It is clear that,  for Bykov cycles of the same chirality,
 the annihilation of hyperbolic horseshoes associated to tangencies only takes place near symmetry. In the fully asymmetric case, the  general study seems to be analitically untreatable.



Many questions remain for future work. One obvious question is to assume different chirality of the nodes. Due to the infinite number of reversals  described in \cite{LR3} , other types
 of bifurcations may occur. The second question is to observe whether the non-wandering set may be reduced to a homoclinic class.

\subsection{This article}

We study the dynamics arising near  a symmetric differential equation with a specific type of heteroclinic network.
We show that when part of the symmetry is broken, the dynamics
 undergoes  a global transition from hyperbolicity 
 coexisting with infinitely many sinks, to the emergence of regular dynamics. 
We discuss the  global bifurcations  that occur as a parameter $\lambda$ is used to break part of the symmetry. 
We complete our results by reducing our problem to a symmetric version of the structure of  Palis and Takens' result  \cite[\S 3]{PT} on homoclinic bifurcations. 
 Being close to  symmetry adds complexity to the dynamics.  Chirality is an essential information in this problem.

This article is organised as follows. 
In Section~\ref{secObject}, after some basic definitions, we describe precisely the object of study and
we review some of our recent  results related to it.
In Section~\ref{secStatement} we state the main results of the present  article. 
 The coordinates and other notation  used in the rest of the article are presented in  Section~\ref{localdyn}, where we also  obtain a geometrical description of the way the flow transforms a curve of initial conditions lying across the stable manifold of an equilibrium. 
In Section \ref{sec tangency}, we prove that there is a sequence of parameter values $\lambda_i$ accumulating on $0$ such that the associated flow has  heteroclinic tangencies.
In Section \ref{bif}, we discuss the geometric constructions that describe the global dynamics near a Bykov cycle as the parameter varies.
 We also describe the limit set that contains nontrivial hyperbolic subsets and we explain how the horseshoes disappear as the system regains full symmetry. 
 We show that under an additional condition this creates infinitely many attracting periodic solutions.

\section{The object of study and preliminary results}\label{secObject}
In the present section, after some preliminary definitions, we
 state the hypotheses for the system under  study together with an overview of  results obtained in \cite{LR}, emphasizing those that will be used to explain the loss of hyperbolicity of the suspended horseshoes and the emergence of heteroclinic tangencies near the cycle.

\subsection{Definitions}
\label{preliminaries} 
Let $f$ be a $C^r$ vector field on the unit three-sphere $\EU^3$, $r\geq 3$, with flow given by the unique solution  $x(t)=\varphi(t,x_{0})\in \EU^{3}$ of 
\begin{equation}
\label{general}
\dot{x}=f_\lambda(x) \qquad  \text{and} \qquad x(0)=x_{0}.
\end{equation}
where $r\geq 3$ and $\lambda \in \RR$. Suppose that $\vv$ and $\ww$ are two hyperbolic saddle-foci of (\ref{general}) with different Morse indices (dimension of the unstable manifold), say 1 and 2.  There is a  {\em heteroclinic cycle }associated to $\{\vv, \ww\}$ if 
$$
W^{u}(\vv)\cap W^{s}(\ww)\neq \emptyset \qquad \text{and} \qquad  W^{u}(\ww)\cap W^{s}(\vv)\neq \emptyset 
$$
where $W^s(\star)$ and $W^u(\star )$ refer to the stable and unstable manifolds of the hyperbolic saddle $\star$, respectively. The terminology $[\vv \to \ww]$ or $[\ww \to \vv]$ denotes a solution contained in $W^{u}(\vv)\cap W^{s}(\ww)$ or $W^{u}(\vv)\cap W^{s}(\ww)$, respectively.
   A \emph{heteroclinic network} is a finite connected union of heteroclinic cycles. 

For $\lambda=0$, there is a 1-dimensional trajectory in $W^{u}(\vv)\cap W^{s}(\ww)$ and a $2$-dimensional connected flow-invariant  manifold contained in $W^{u}(\ww)\cap W^{s}(\vv)$, meaning that there are a continuum of solutions connecting $\ww$ and $\vv$.  For $\lambda\neq 0$, the one-dimensional manifolds of the equilibria coincide and the two-dimensional invariant manifolds have a transverse intersection. This 
 second situation
 is what we call a \emph{Bykov cycle}.




These objects are known to exist in several settings  and are structurally stable within certain classes of 
${\mathbf G}$-equivariant systems, where ${\mathbf G}\subset \textbf{O}(n)$ is a compact Lie group. Here we consider differential equations (\ref{general}) with the equivariance condition:
$$
f_\lambda(\gamma x)=\gamma f_\lambda(x), \qquad \text{for all  } \gamma \in {\mathbf G},  \lambda \in \RR.
$$
 An \emph{ isotropy subgroup} of ${\mathbf G}$ is a set $\widetilde{{\mathbf G}}=\{\gamma\in{\mathbf G}:\ \gamma x=x\}$ for some $x$ in phase space;
we write $\Fix(\widetilde{{\mathbf G}})$ for the vector subspace  of points that are fixed by the elements of 
$\widetilde{{\mathbf G}}$.
For ${\mathbf G}$-equivariant differential equations each subspace $\Fix(\widetilde{{\mathbf G}})$ is flow-invariant.
 The group theoretical methods developed in \cite{GS}  are a powerful tool for the analysis of systems with symmetry. 

Suppose there is a cross-section $S$ to the flow of (\ref{general}), such that $S$ contains a compact invariant set $\Lambda$ where the first return map is well defined and conjugate to a full shift on a countable alphabet,
 we call the  flow-invariant set $\widetilde\Lambda=\{\varphi(t,q)\,:\,t\in\RR,q\in\Lambda\}$ a \emph{suspended horseshoe}.

\subsection{The organising centre}\label{object}
The starting point of the analysis is  a  differential equation $\dot{x}=f_0(x)$ on the unit sphere $\EU^3 =\{X=(x_1,x_2,x_3,x_4) \in \RR^4: ||X||=1\}$
where $f_0: \EU^3 \rightarrow \mathbf{T}\EU^3$ is a
 $C^3$
 vector field  with the following properties:
\begin{enumerate}
\renewcommand{\labelenumi}{(P{\theenumi})}
\item\label{P1} 
The vector field $f_0$  is equivariant under the action of $ {\mathbf G}  =\ZZ_2 \oplus \ZZ_2$ on 
$\EU^3$ induced by the  linear maps on $\RR^4$:
$$
\gamma_1(x_1,x_2,x_3,x_4)=(-x_1,-x_2,x_3,x_4)  
\qquad 
\text{and}
\qquad
\gamma_2(x_1,x_2,x_3,x_4)=(x_1,x_2,-x_3,x_4).$$

\item\label{P2} 
The set $\Fix( \ZZ_2 \oplus \ZZ_2)=\{x \in \EU^3:\gamma_1 x=\gamma_2 x = x \}$ consists of two
equilibria $\vv=(0,0,0,1)$ and $\ww=(0,0,0,-1)$ that  are hyperbolic saddle-foci, 
 where:
\begin{itemize}
\item 
the eigenvalues of $df_0(\vv)$ are
$-C_{\vv } \pm \alpha_{\vv }i$ and $E_{\vv }$ with $\alpha_{\vv } \neq 0$, $C_{\vv }>E_{\vv }>0$;
\item 
the eigenvalues of $df_0(\ww)$ are
$E_{\ww } \pm \alpha_{\ww } i$ and  $-C_{\ww }$ with $\alpha_{\ww } \neq 0$, $C_{\ww }>E_{\ww }>0$.
\end{itemize}
 
\item\label{P3} 
The flow-invariant circle $\Fix(\zg{1})=\{x \in \EU^3:\gamma_1 x = x \}$ consists of the two equilibria 
$\vv$ and $\ww$,  a source and a sink,
respectively, and two heteroclinic trajectories 
 $[\vv \rightarrow \ww]$.

\item\label{P4} 
The  $f_0$-invariant sphere $\Fix(\zg{2})=\{x \in \EU^3:\gamma_2 x = x \}$ consists of the two equilibria $\vv$ and $\ww$, 
and a two-dimensional heteroclinic connection from $\ww$ to $\vv$.
Together with the connections in (P\ref{P3}) this forms a  heteroclinic network that we denote by $\Sigma^0$.
\setcounter{lixo}{\value{enumi}}
\end{enumerate}

Given two small open neighbourhoods $V$ and $W$ of $\vv$ and $\ww$ respectively, consider a piece of trajectory $\varphi$ that starts at $\partial V$, goes into $V$ and then goes once from $V$ to $W$, enters $W$ and ends at $\partial W$. Joining the starting point of $\varphi$ to its end point by a line segment, one obtains a closed curve, the \emph{loop} of $\varphi$.  For almost all starting positions in $\partial V$, the loop of $\varphi$ does not meet the network $\Sigma^0$. If there are arbitrarily small neighbourhoods $V$ and $W$ for which the loop of every trajectory is linked  to $\Sigma_0$, we say that \emph{the nodes have the same chirality} as illustrated in Figure \ref{orientations}. This means that near $\vv$ and $\ww$, all trajectories turn in the same direction around the one-dimensional connections $[\vv\rightarrow \ww]$. This is our last hypothesis on $f_0$:
  \begin{enumerate}
  \renewcommand{\labelenumi}{(P{\theenumi})}
\setcounter{enumi}{\value{lixo}}

\item\label{P6} The saddle-foci $\vv$ and $\ww$ have the same chirality.

\setcounter{lixo}{\value{enumi}}
\end{enumerate}

Condition (P\ref{P6}) means that the curve $\varphi$ and the cycle $\Sigma^0$ cannot be separated by an isotopy.  
This property is persistent under small smooth perturbations of the vector field that preserve the one-dimensional connection.
An explicit example of a family of differential equations where this assumption is valid has been constructed in \cite{LR2}. The rigorous analysis of a case where property (P\ref{P6}) does not hold has been done by the authors in  \cite{LR3}. 

\subsection{The heteroclinic network of the organising centre}

The heteroclinic connections in the network $\Sigma^0$ are contained in fixed point subspaces satisfying the hypothesis (H1) of Krupa and Melbourne \cite{KM1}. Since the inequality $C_\vv C_\ww >E_\vv E_\ww$ holds, the  stability criterion \cite{KM1}  may be applied to $\Sigma^0$ and we have: 

\begin{lemma}\label{propNetworkIstStable}
Under conditions (P\ref{P1})--(P\ref{P4})
the heteroclinic network $\Sigma^0$  is asymptotically stable. 
\end{lemma}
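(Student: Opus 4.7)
The plan is to directly apply the asymptotic stability criterion of Krupa and Melbourne \cite{KM1} for heteroclinic networks whose connections lie in fixed-point subspaces of isotropy subgroups. The hint for this is already given in the paragraph preceding the lemma, so the work consists in verifying that the hypotheses of \cite{KM1} are met and then computing the relevant eigenvalue ratios.

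First I would identify the symmetric structure of the network. By (P\ref{P3}), the one-dimensional connections $[\vv\to\ww]$ lie in the flow-invariant line $\Fix(\zg{1})$, which is the fixed-point subspace of the isotropy subgroup $\langle\gamma_1\rangle$. By (P\ref{P4}), the two-dimensional connection $[\ww\to\vv]$ lies in the flow-invariant sphere $\Fix(\zg{2})$, which is the fixed-point subspace of $\langle\gamma_2\rangle$. Each cycle of $\Sigma^0$ consists of one connection of each type, and each connection is of \emph{saddle-sink} type within its ambient fixed-point subspace: along $\Fix(\zg{1})$ the eigenvalues at $\vv$ and $\ww$ are respectively $E_\vv>0$ and $-C_\ww<0$, and along $\Fix(\zg{2})$ the eigenvalues at $\ww$ and $\vv$ are respectively $E_\ww\pm i\alpha_\ww$ and $-C_\vv\pm i\alpha_\vv$. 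This is exactly the configuration required by hypothesis (H1) of \cite{KM1}: each connecting trajectory from a node lies in a one-dimensional unstable manifold inside a fixed-point subspace in which the next node is a sink.

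Next I would write down the transition matrices associated to the two nodes, whose relevant entries are the ratios of the contracting (radial) rate to the expanding rate along the connection leaving the node. At $\vv$, the contracting eigenvalue transverse to the outgoing connection is the real part $-C_\vv$ of the complex pair and the expanding eigenvalue along $\Fix(\zg{1})$ is $E_\vv$, giving ratio $C_\vv/E_\vv$. At $\ww$, the contracting eigenvalue is $-C_\ww$ (from the direction along $\Fix(\zg{1})$ which is now transverse to the outgoing direction) and the expanding rate into $\Fix(\zg{2})$ is $E_\ww$, giving ratio $C_\ww/E_\ww$. The criterion of \cite{KM1} asserts asymptotic stability of the cycle once the product of these ratios exceeds one. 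The eigenvalue condition in (P\ref{P2}), namely $C_\vv>E_\vv>0$ and $C_\ww>E_\ww>0$, clearly yields $C_\vv C_\ww>E_\vv E_\ww$, hence
\[
\frac{C_\vv}{E_\vv}\cdot\frac{C_\ww}{E_\ww}>1,
\]
which gives asymptotic stability of each individual cycle.

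Finally, I would argue that asymptotic stability of the whole network $\Sigma^0$ follows from asymptotic stability of each cycle together with the symmetry, since the group $\ZZ_2\oplus\ZZ_2$ acts on $\Sigma^0$ permuting the cycles, so the union of basins is a full neighbourhood of $\Sigma^0$. The main subtlety, and the step I would check most carefully, is the verification of (H1): one needs to make sure that no \emph{additional} eigenvalues at $\vv$ or $\ww$ (coming from directions transverse to all fixed-point subspaces) produce expansion that would dominate the contraction --- but under (P\ref{P2}) all eigenvalues transverse to the outgoing connection have negative real part equal to either $-C_\vv$ or $-C_\ww$, so this is automatic. The rest of the argument is a direct translation of the Krupa--Melbourne theorem into our notation.
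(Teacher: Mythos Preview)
Your proposal is correct and follows exactly the approach the paper takes: the paper's entire argument is contained in the sentence preceding the lemma, which observes that the connections lie in fixed-point subspaces satisfying hypothesis (H1) of Krupa--Melbourne \cite{KM1} and that the inequality $C_\vv C_\ww > E_\vv E_\ww$ (immediate from (P\ref{P2})) allows one to invoke their stability criterion. Your write-up simply fills in the details of that citation more explicitly than the paper does.
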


As a consequence of Lemma~\ref{propNetworkIstStable} there exists an open neighbourhood $V^0$ of the network $\Sigma^0$ such that every trajectory starting in $V^0$ remains in it for all positive time and is forward asymptotic to the network. 
The neighbourhood may be taken to have its boundary transverse to the vector field $f_0$. 
The flow associated to any $C^1$-perturbation of $f_0$ that breaks the one-dimensional connection should have some attracting feature. 

\subsection{Breaking  the $\ZZ_2(\zg{1})$-symmetry}

When the symmetry  
 $\ZZ_2(\zg{1})$ is broken, the two one-dimensional heteroclinic connections are destroyed and the cycle $\Sigma^0$ disappears. Each cycle is replaced by a hyperbolic sink that lies close to the original cycle \cite{LR}. For sufficiently small  $C^1$-perturbations, the existence of solutions that go several times around the cycles is ruled out. 

The fixed point hyperplane defined by $\text{Fix}(\zg{2})=\{(x_1, x_2, x_3, x_4) \in \EU^3: x_3=0\}$ divides $\EU^3$ in two flow-invariant connected components, preventing arbitrarily visits to both cycles in $\Sigma^0$. 
Trajectories whose initial condition lies outside the invariant subspaces will approach one of the cycles in positive time. Successive visits to both cycles require breaking this symmetry \cite{LR}.

\subsection{Breaking  the $\ZZ_2(\zg{2})$-symmetry}

From now on, we consider $f_0$ embedded in a generic one-parameter family of vector fields, breaking the $\zg{2}$-equivariance as follows:
  \begin{enumerate}
\renewcommand{\labelenumi}{(P{\theenumi})}
\setcounter{enumi}{\value{lixo}}
\item \label{P5 1/2} The vector fields $f_\lambda: \EU^3 \rightarrow \mathbf{T}\EU^3$ are a $C^3$ family of $\zg{1}$-equivariant $C^3$ vector fields. 
\setcounter{lixo}{\value{enumi}}
\end{enumerate}
Since the equilibria $\vv$ and $\ww$ lie on $\Fix(\zg{1})$ and are hyperbolic, they persist for small $\lambda>0$
and still satisfy Properties (P\ref{P2}) and (P\ref{P3}). 
Their invariant two-dimensional manifolds  generically meet transversely along two trajectories.
The generic bifurcations from a manifold are discussed by Chillingworth \cite{Chillingworth}, under these conditions
 we assume:
  \begin{enumerate}
\renewcommand{\labelenumi}{(P{\theenumi})}
\setcounter{enumi}{\value{lixo}}
\item\label{P5.}  
For $\lambda \neq 0$, the two-dimensional manifolds $W^u(\ww)$ and $W^s(\vv)$ intersect transversely at two trajectories
 that we will call \emph{primary connections}.
Together with the connections in (P\ref{P3}) this forms a Bykov  heteroclinic network that we denote by $\Sigma^\lambda$.
\setcounter{lixo}{\value{enumi}}
\end{enumerate}

The network $\Sigma^\lambda$ consists of four copies of the simplest heteroclinic cycle between two saddle-foci of different Morse indices, where one heteroclinic connection is structurally stable and the other is not, a \emph{Bykov cycle}. 
 Property (P\ref{P5.}) is natural since the heteroclinic connections 
of (P\ref{P5.}), as well as those of  assertion~\eqref{item4} of Theorem~\ref{teorema T-point switching} below, occur at least in symmetric pairs.  
 In what follows, we describe the dynamics near a subnetwork consisting of 
the two primary connections together with one of the unstable connections.
The results obtained concern each one of  the two subnetworks of this type.



For small $\lambda\ne 0$, the neighbourhood $V^0$ is  still  positively invariant and contains the network $\Sigma^\lambda$.
Since the closure of $V^0$ is compact and positively invariant it contains  the $\omega$-limit sets of all its trajectories. 
The union of these limit sets is a maximal invariant set in $V^0$. 
For $f_0$ this is the cycle $\Sigma^0$, by Lemma~\ref{propNetworkIstStable},
whereas for symmetry-breaking perturbations of $f_0$ it contains $\Sigma^\lambda$ but does not coincide with it. 
Our aim is to describe this set  and its sudden appearance.

We proceed to review the dynamics in a small tubular neighbourhood of the cycle.
In order to do this we introduce some concepts.

Let ${\Gamma\subset \Sigma^\lambda}$ be one Bykov cycle involving $\vv$ and $\ww$, and the connections $[\vv\rightarrow\ww]$ and  $[\ww\rightarrow\vv]$ {given by (P\ref{P3}) and (P\ref{P5.})},  respectively. Let $V,W \subset V^0$  be disjoint neighbourhoods of the equilibria as above. 
Consider two  local cross-sections of $\Gamma$ at two points $p$ and $q$ in the connections $[\vv\rightarrow\ww]$ and  $[\ww\rightarrow\vv]$, respectively, with $p, q\not\in V\cup W$.
 Saturating the cross-sections by the flow, one obtains two flow-invariant tubes joining $V$ and $W$ that contain the connections in their interior. 
 We call the union of these tubes with $V$ and $W$ a \emph{tubular neighbourhood} 
 $\mathcal{T}$ of the Bykov cycle. More details will be provided in Section \ref{localdyn}.

\begin{definition}
Let  $V,W \subset V^0$ be two disjoint neighbourhoods of $\vv$ and $\ww$, respectively. A one-dimensional connection
 $[\ww\to\vv]$
 that, after leaving $W$, enters and leaves both $V$ and $W$ precisely $n \in \NN$ times  is called an  \emph{$n$-pulse heteroclinic connection} with respect to $V$ and $W$.  When there is no ambiguity, we omit the expression ``with respect to $V$ and $W$''.
 If  $n>1$
 we call it a \emph{multi-pulse heteroclinic connection}. 
If $W^u(\ww)$ and $W^s(\vv)$ meet tangentially along $[\ww \rightarrow \vv]$, we say that the connection $[\ww \rightarrow \vv]$ is an \emph{$n$-pulse heteroclinic tangency}, otherwise we call it a \emph{transverse $n$-pulse heteroclinic connection}.
\end{definition}
The  primary
 connections $[\ww \rightarrow \vv]$ in $\Sigma^\lambda$ of (P\ref{P5.}) are transverse $0$-pulse heteroclinic connections. With these conventions we have:
 
 \begin{theorem}[\cite{LR}]
\label{teorema T-point switching}
If a vector field $f_0$
satisfies (P\ref{P1})--(P\ref{P6}), then the following properties are satisfied generically by vector fields in an open neighbourhood of $f_0$  in the space of 
$\zg{1}$-equivariant vector fields of class $C^r$ on $\EU^3$, $r\geq 3$:
\begin{enumerate}
\item\label{item0}
there is a heteroclinic network $\Sigma^*$ consisting of four Bykov cycles involving two equilibria 
$\vv$ and $\ww$, two 0-pulse heteroclinic connections $[\vv\rightarrow\ww]$ and two 0-pulse heteroclinic connections
$[\ww\rightarrow\vv]$;
\item \label{item1} 
the only heteroclinic connections from $\vv $ to $\ww $ are those in the Bykov cycles and there are no homoclinic connections;
\item \label{item4} 
any tubular neighbourhood of a Bykov cycle $\Gamma$ in $\Sigma^*$ contains 
infinitely many $n$-pulse heteroclinic connections $[\ww\to\vv]$ for each $n\in\NN$, that accumulate on the cycle;
\item \label{item5} for any tubular neighbourhood $\mathcal{T}$, given a  cross-section $S_q\subset \mathcal{T}$ at a point  $q$ in $[\ww\rightarrow\vv]$, there exist sets of points such that the dynamics of the first return to  $S_q$ is  uniformly hyperbolic and conjugate to a full shift over a finite number of symbols. These sets accumulate on $\Sigma^*$ and   the number of symbols coding the return map tends to infinity as we approach the network.
\end{enumerate}
\end{theorem}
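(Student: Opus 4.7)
The plan is to exploit the remaining $\zg{1}$-symmetry together with the transversality hypothesis (P\ref{P5.}), and then perform a Shilnikov-type analysis of the first return map to a cross-section, with the chirality hypothesis (P\ref{P6}) playing the crucial role at the end.

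For assertions (\ref{item0}) and (\ref{item1}), the invariant circle $\Fix(\zg{1})$ persists under the $\zg{1}$-equivariant perturbation of (P\ref{P5 1/2}), so the two connections $[\vv \to \ww]$ are preserved; the transverse intersection of $W^u(\ww)$ with $W^s(\vv)$ coming from (P\ref{P5.}) consists of trajectories organised in $\gamma_1$-pairs, giving (in the codimension-zero case) exactly four Bykov cycles that form $\Sigma^*$. Absence of further connections from $\vv$ to $\ww$ and of homoclinic loops is non-generic (since $\dim W^u(\vv) + \dim W^s(\vv) = 3$ in $\EU^3$) and is ruled out on an open dense set by a standard Kupka--Smale argument in the $\zg{1}$-equivariant category.

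The substance lies in (\ref{item4}) and (\ref{item5}). After fixing linearising coordinates near $\vv$ and $\ww$, with in/out cross-sections $\In(\vv),\Out(\vv),\In(\ww),\Out(\ww)$ transverse to the local invariant manifolds, the local transition maps have the explicit spiral form dictated by the Shilnikov indices $\delta_\vv = C_\vv/E_\vv > 1$ and $\delta_\ww = C_\ww/E_\ww > 1$, both greater than one by (P\ref{P2}). Composing with smooth global transition maps along the primary connections gives an explicit first return map $\Pi_\lambda$ on a cross-section $S_q$ of spiral-within-spiral type. To produce the multi-pulse connections of (\ref{item4}), I would track the image of $W^u(\ww)\cap S_q$ under iteration: the local map near $\vv$ wraps it into a logarithmic spiral accumulating on $W^u(\vv)$; the global transition along $[\vv \to \ww]$ carries this spiral into a curve meeting $W^s(\vv)\cap \In(\ww)$ transversely in countably many points, each giving a transverse $1$-pulse connection, and induction yields the $(n+1)$-pulse connections clustering between consecutive $n$-pulse ones. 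For (\ref{item5}), between any two consecutive transverse intersections one isolates a horizontal strip on which $\Pi_\lambda$ satisfies the Conley--Moser horseshoe conditions, with contraction and expansion controlled by $\delta_\vv$ and $\delta_\ww$; the alphabet size grows without bound as the strips approach the cycle, producing the conjugacies with shifts over arbitrarily many symbols.

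The main obstacle is the use of chirality. For (P\ref{P6}) to force the infinitely many intersections to be \emph{transverse} and to accumulate on the cycle, the twist angles contributed at the two saddle-foci must \emph{add} over a full circuit rather than cancel. I would verify this by reading the signs of the angular components of the local maps from the linking characterisation of same chirality given in Section~\ref{object}, and showing that the relative rotation introduced at each excursion cannot unwind the spiral at the next scale. This is exactly the step that distinguishes the present theorem from its analogue in \cite{LR3}, where opposite chirality produces generic tangencies rather than the transverse multi-pulse picture described above.
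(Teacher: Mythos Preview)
This theorem is not proved in the present paper: it is quoted from \cite{LR} as background, so there is no ``paper's own proof'' to compare against here. What the paper does contain is a partial reconstruction of the relevant machinery in Sections~\ref{localdyn} and~\ref{bif} (the local maps $\Phi_\vv,\Phi_\ww$, the composite $\eta$, the helix Lemma~\ref{Structures}, Proposition~\ref{PropWuw}, and the strip Lemmas~\ref{lemaHorizontalStrips}--\ref{Novo2}), developed there to prove the new Theorems~\ref{teorema tangency}--\ref{newhouse}. Your sketch is consistent with that machinery and with the argument in \cite{LR}: persistence of the connections in $\Fix(\zg{1})$ plus (P\ref{P5.}) gives (\ref{item0}); a Kupka--Smale count in the $\zg{1}$-equivariant category gives (\ref{item1}); and a Shilnikov-type analysis of the return map, with the two angular contributions \emph{adding} under (P\ref{P6}), yields the helix/spiral accumulation and the Conley--Moser horseshoes for (\ref{item4})--(\ref{item5}).

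Two small points. First, your tracking of $W^u(\ww)$ in the paragraph on (\ref{item4}) has a slip: the spiral you produce after passing through $V$ and along $[\vv\to\ww]$ lands in $\Out(\ww)$ (or, after one more transition, in $\In(\vv)$), and it is there that you compare with $W^s_{\loc}(\vv)$ --- not with ``$W^s(\vv)\cap\In(\ww)$'', which is not where $W^s(\vv)$ is seen. In the paper's language this is exactly Proposition~\ref{PropWuw}: $W^u_{\loc}(\ww)\cap\In^+(\vv)$ is sent by $\eta$ to a helix in $\Out(\ww)$ accumulating on $W^u_{\loc}(\ww)$, and its transverse crossings with $W^s_{\loc}(\vv)\cap\Out(\ww)$ give the $1$-pulse connections; induction then proceeds as you indicate. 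Second, your final paragraph slightly overstates the role of chirality for (\ref{item4})--(\ref{item5}): the existence of infinitely many multi-pulse connections and of suspended horseshoes holds for Bykov cycles of \emph{either} chirality (see \cite{KLW} and the remarks in the Introduction); what same chirality buys, via the monotone angular coordinate in $\eta$, is the clean helix geometry of Lemma~\ref{Structures} and the absence of the extra fold points that in the opposite-chirality case of \cite{LR3} force tangencies already inside the tubular neighbourhood.
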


Notice that  assertion~\eqref{item4} of Theorem~\ref{teorema T-point switching}
 implies the existence of a bigger network:
beyond the original transverse connections $[\ww\rightarrow\vv]$, there exist infinitely many subsidiary heteroclinic connections turning around the original Bykov cycle. 
 It also follows from \eqref{item4} and \eqref{item5} that any tubular neighbourhood $\mathcal{T}$ of a Bykov cycle $\Gamma$ in $\Sigma^*$ contains points not lying on $\Gamma$ whose trajectories remain in $\mathcal{T}$ for all time. In contrast to the findings of \cite{Shilnikov65, Shilnikov67, Shilnikov67A}, the suspended horseshoes of (\ref{item5}) arise due to the presence of  two saddle-foci together with transversality of invariant manifolds, and does not depend on any additional condition on the eigenvalues at the equilibria.

A hyperbolic invariant set of a $C^2$-diffeomorphism has zero Lebesgue measure \cite{Bowen75}. Nevertheless, since the authors of \cite{LR} worked in the $C^1$ category, this set of horseshoes might have positive Lebesgue measure. Rodrigues \cite{Rodrigues3} proved that this is not the case:

\begin{theorem}[\cite{Rodrigues3}]
\label{zero measure}
Let $\mathcal{T}$  be a tubular neighbourhood  of one of the Bykov cycles $\Gamma$ of 
Theorem~\ref{teorema T-point switching}.
Then in any cross-section $S_q\subset \mathcal{T}$ the set of initial conditions in $S_q$
that do not leave $\mathcal{T}$ for all time, has zero Lebesgue measure.
\end{theorem}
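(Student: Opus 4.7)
The plan is to reduce the claim to a statement about the first-return map $R$ to $S_q$, and then to conclude via a Fubini-slicing argument. The set of initial conditions in $S_q$ whose forward trajectory never leaves $\mathcal{T}$ is exactly $T_\infty=\bigcap_{n\geq 0}R^{-n}(\mathrm{Dom}\,R)$, so it suffices to show that $T_\infty$ has zero two-dimensional Lebesgue measure in $S_q$.

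First I would use the local coordinates of Section~\ref{localdyn} to describe $\mathrm{Dom}(R)\subset S_q$ as a countable disjoint union $\bigsqcup_{k\geq k_0} H_k$ of strips, accumulating on the curve $W^s(\vv)\cap S_q$ and with widths decaying geometrically at the rate set by $\alpha_\vv/E_\vv$. Composing the local saddle-focus transitions at $\vv$ and $\ww$ with the global connecting diffeomorphisms yields, on each $H_k$ in coordinates $(x,y)$ with $\{y=0\}=W^s(\vv)\cap S_q$, a return map that contracts in the $x$-direction by a factor of order $y^{\delta}$, where $\delta=(C_\vv C_\ww)/(E_\vv E_\ww)>1$, and expands in the $y$-direction by a factor of order $1/y$. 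Thus $R$ is a (non-uniformly) hyperbolic map on $\mathrm{Dom}(R)$, with forward- and backward-invariant families of vertical unstable and horizontal stable cones.

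Once hyperbolicity is established, I would reduce the problem to one dimension. The stable foliation of $R$ consists of nearly horizontal $C^1$ curves on each strip; choose a smooth transversal $\tau$ to it, and let $\widetilde R$ be the induced first-return map on $\tau \cap \mathrm{Dom}(R)$. Then $\widetilde R$ is a piecewise $C^{2}$ map with countably many branches, one per strip $H_k$, whose derivative satisfies $|\widetilde R\,'|\geq K/y$. A classical distortion-and-Borel--Cantelli argument for piecewise expanding interval maps with escape shows that the set of points in $\tau$ whose $\widetilde R$-orbit remains in $\mathrm{Dom}(\widetilde R)$ for all iterates has one-dimensional Lebesgue measure zero. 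Absolute continuity of the stable holonomy of $R$ combined with Fubini's theorem transfers this to $|T_\infty|=0$ in $S_q$.

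The main obstacle is the non-uniformity near $y=0$: the expansion rate of $\widetilde R$ blows up there, its distortion becomes unbounded, and the number of branches is infinite. I would handle this by truncating: discard all strips $H_k$ with $k\geq k_1$, whose combined area is summable and hence arbitrarily small when $k_1$ is large. On the finitely many remaining strips $\widetilde R$ is uniformly expanding with bounded distortion, so the classical one-dimensional theory applies cleanly; letting $k_1\to\infty$ concludes the proof.
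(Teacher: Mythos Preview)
This theorem is not proved in the present paper; it is quoted from \cite{Rodrigues3} and the paper supplies no argument beyond the citation. There is therefore no ``paper's own proof'' to compare your attempt against---any comparison would have to be with the original article of Rodrigues.

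On the substance of your sketch, the overall strategy (reduce to the first-return map, exhibit a countable Markov structure of strips, pass to a one-dimensional expanding quotient via the stable foliation, conclude by a distortion/escape argument) is a reasonable plan for this type of statement. Two points need care. First, your description of the hyperbolic splitting is imprecise: from the explicit Jacobian
\[
D\eta(x,y)=\begin{pmatrix}1 & -K/y\\ 0 & \delta y^{\delta-1}\end{pmatrix}
\]
the eigenvalues of $D\eta$ alone are $1$ and $\delta y^{\delta-1}$, so the contraction is not ``of order $y^{\delta}$ in the $x$-direction'' and the expansion is not simply ``$1/y$ in the $y$-direction''. The genuine expansion appears only after composing with the global transition and is along the image helices, not along a coordinate axis; this matters when you try to identify the stable foliation and justify the bound $|\widetilde R\,'|\gtrsim 1/y$.

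Second, and more seriously, the truncation step does not close as written. Discarding the strips $H_k$ with $k\ge k_1$ controls only the measure of points that \emph{start} in those strips, not the measure of points in $T_\infty$ whose orbit \emph{eventually enters} them. Since $\det DR_\lambda=\delta y^{\delta-1}\to 0$, the return map is area-contracting, so preimages of small sets can be large; you cannot bound $\bigl|\,T_\infty\cap\{\text{some iterate lands in }\bigcup_{k\ge k_1}H_k\}\,\bigr|$ by the area of the discarded strips. A workable fix is to avoid truncation altogether: show that on a sufficiently thin collar $\{0<y<y_0\}$ the induced one-dimensional map is uniformly expanding with summable branch lengths and bounded distortion on each branch, and then run the standard cylinder-length argument for countable-branch expanding maps. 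That is essentially what the proof in \cite{Rodrigues3} has to do, and it is the missing ingredient in your outline.
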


The shift dynamics does not trap most solutions in the neighbourhood of the cycle. In particular, none of the cycles is Lyapunov stable.

\section{Statement of results}\label{secStatement}
Heteroclinic cycles connecting saddle-foci with a transverse intersection of two-dimensional invariant manifolds imply the existence of hyperbolic suspended horseshoes. In our setting, when $\lambda$ varies close to zero, we expect the creation and the annihilation of these horseshoes. When the symmetry $\ZZ_2(\left\langle \gamma_2\right\rangle)$ is broken, heteroclinic tangencies are reported in the next result.

\begin{theorem}
\label{teorema tangency}
In the set of  families $f_\lambda$ of vector fields satisfying (P\ref{P1})--(P\ref{P5.})
there is a subset ${\mathcal C}$, open  in the $C^3$ topology, for which there is a sequence $\lambda_i>0$ of real numbers, with $\lim_{i\to\infty}\lambda_i=0$ 
such that for $\lambda>\lambda_i$,
there are two 1-pulse heteroclinic connections for the flow of $\dot{x}=f_\lambda(x)$, that collapse into a  1-pulse heteroclinic tangency at $\lambda=\lambda_i$ and then disappear for $\lambda<\lambda_i$. Moreover, the 1-pulse heteroclinic tangency approaches the original $[\ww\to\vv]$ connection when $\lambda_i$ tends to zero.
\end{theorem}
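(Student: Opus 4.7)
The plan is to realise each 1-pulse heteroclinic connection $[\ww\to\vv]$ as an intersection point of two curves on a cross-section near $\vv$, and then to produce a sequence of quadratic tangencies at $\lambda=\lambda_i\to 0^+$ by combining the $O(\lambda)$-proximity of the perturbed $[\ww\to\vv]$ transition to its symmetric limit with the logarithmic spiralling from the complex saddle-focus eigenvalues. Following the construction of Section~\ref{localdyn}, I choose a cylindrical cross-section $S_{\In}^\vv$ near $\vv$ with coordinates $(x_3,\phi)$ so that $\mathcal{C}_s:=W^s(\vv)\cap S_{\In}^\vv=\{x_3=0\}$, and the local map has the standard saddle-focus form
\[
\Phi_\vv(x_3,\phi)=\bigl(|x_3|^{\delta_\vv},\,\phi-(\alpha_\vv/E_\vv)\ln|x_3|\bigr)+\text{h.o.t.},\qquad \delta_\vv=C_\vv/E_\vv>1,
\]
with an analogous description of $\Phi_\ww$ and $\mathcal{C}_u:=W^u(\ww)\cap S_{\Out}^\ww$. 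The transitions $T_{\vv\to\ww}$ and $T^\lambda_{\ww\to\vv}$ are $C^3$-families of diffeomorphisms between the relevant cross-sections, and the key identity $T^0_{\ww\to\vv}(\mathcal{C}_u)=\mathcal{C}_s$ is imposed by the $f_0$-invariant sphere $\Fix(\zg{2})$. Writing
\[
R_\lambda:=T^\lambda_{\ww\to\vv}\circ\Phi_\ww\circ T_{\vv\to\ww}\circ\Phi_\vv\circ T^\lambda_{\ww\to\vv},
\]
a 1-pulse heteroclinic connection is a point of $R_\lambda(\mathcal{C}_u)\cap\mathcal{C}_s$ and a 1-pulse tangency is a quadratic such intersection.

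The main computation is to derive the leading asymptotics of the $x_3$-component $F(\alpha,\lambda)$ of $R_\lambda$ along a parameter $\alpha$ on $\mathcal{C}_u$. The identity $T^0_{\ww\to\vv}(\mathcal{C}_u)=\mathcal{C}_s$ combined with $C^3$-smoothness in $\lambda$ forces the curve $L_\lambda:=T^\lambda_{\ww\to\vv}(\mathcal{C}_u)$ to have $x_3$-amplitude of order $\lambda$ and to meet $\mathcal{C}_s$ transversely in exactly the two primary connections of~(P\ref{P5.}). Tracking the scaling $|x_3|\sim\lambda$ through $\Phi_\vv$, $T_{\vv\to\ww}$, $\Phi_\ww$ and $T^\lambda_{\ww\to\vv}$, and invoking the two logarithmic terms in $\Phi_\vv$ and $\Phi_\ww$, one obtains an expansion
\[
F(\alpha,\lambda)=\lambda\,g\bigl(\phi_{\ast}(\alpha,\lambda)\bigr)+o(\lambda),\qquad \phi_{\ast}(\alpha,\lambda)=\phi_0(\alpha)+\beta\ln(1/\lambda)+O(1),
\]
where $g$ is a $2\pi$-periodic function with two simple zeros per period and $\beta>0$ combines $\alpha_\vv/E_\vv$ and $\alpha_\ww/E_\ww$. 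Consequently $R_\lambda(\mathcal{C}_u)$ winds around $\mathcal{C}_s$ a number of times $N(\lambda)\sim|\ln\lambda|\to\infty$ as $\lambda\to 0^+$, and the zero set of $F$ in $(\alpha,\lambda)$ accumulates at $\lambda=0$ through a sequence of saddle-node-of-intersection events.

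Generically each such event at $\lambda=\lambda_i$ is a non-degenerate quadratic tangency unfolding transversely in $\lambda$: two transverse zeros of $F(\cdot,\lambda)$ coalesce into a single quadratic zero at $\lambda_i$ and then vanish on one side; orienting the parameter so that the pair exists for $\lambda>\lambda_i$ matches the statement. The angular position of the colliding pair, read off from $\phi_{\ast}$, tends to that of one of the two primary-connection points on $\mathcal{C}_s$ as $\lambda_i\to 0^+$, yielding the claimed convergence of the tangency trajectory to the original $[\ww\to\vv]$ connection. I define $\mathcal{C}$ as the $C^3$-open set of families for which every collision is a non-degenerate quadratic tangency with transverse $\lambda$-unfolding and $g$ has only simple zeros; openness in the $C^3$ topology follows from the stability of transverse unfoldings of quadratic tangencies. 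The step I expect to be most delicate is the quantitative control of the $o(\lambda)$ remainder in the expansion of $F$, uniform in $\alpha$ on compact sets away from the primary-connection angles, together with verifying that each tangency produced is genuinely 1-pulse rather than degenerating to a higher-pulse or 0-pulse configuration; both points require careful bookkeeping of how the logarithmic spiral of $\Phi_\vv(L_\lambda)$ propagates through the remaining local and global maps and uniform control of the Shilnikov-type remainders in the normal forms of Section~\ref{localdyn}.
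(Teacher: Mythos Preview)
Your reduction of 1-pulse connections to zeros of a scalar function $F(\alpha,\lambda)$ on a cross-section is the same as the paper's, and the leading-order expansion $F\approx\lambda\,g(\phi_*)$ with a logarithmic drift in $\phi_*$ is correct. However, the mechanism you give for the accumulation of tangencies is not the right one, and this is where the argument breaks.

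You assert that $R_\lambda(\mathcal{C}_u)$ winds around $\mathcal{C}_s$ a finite number $N(\lambda)\sim|\ln\lambda|$ of times and that tangencies occur as saddle-node events when $N$ increases. In fact, for each fixed $\lambda>0$ the image curve already winds \emph{infinitely} many times. Writing $h_\ww(\tilde\alpha,\lambda)=\lambda H(\tilde\alpha)+o(\lambda)$, one obtains
\[
\phi_*(\alpha,\lambda)=\tilde\alpha-K\ln H(\tilde\alpha)+K\ln(1/\lambda)+O(1),
\]
and since $H$ vanishes at the two primary-connection angles, the term $-K\ln H(\tilde\alpha)$ diverges to $+\infty$ at both endpoints. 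Thus your $\phi_0(\alpha)=\tilde\alpha-K\ln H(\tilde\alpha)$ is not bounded: it has an interior minimum (a \emph{fold}) and goes to $+\infty$ at both ends. The infinitely many transverse intersections come from the two unbounded branches; the tangencies come only from the fold. As $\lambda\to 0$ the fold's angular coordinate increases by $K\ln(1/\lambda)$, and each time it passes a zero of $g$ a pair of intersections at the fold coalesces and disappears. Your expansion contains this fold implicitly, but your narrative (``winds $N(\lambda)$ times'') overlooks it, and without isolating the fold you cannot locate the saddle-node events.

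The paper's proof is organised precisely around this fold. Working in $\Out(\ww)$, Lemma~\ref{Structures} and Proposition~\ref{PropWuw} show that $\eta\bigl(W^u_{\loc}(\ww)\cap\In^+(\vv)\bigr)$ is a helix with a fold point whose angular coordinate turns around the cylinder infinitely often as $\lambda\to 0$. The open set $\mathcal{C}$ is then defined by the explicit geometric condition $\bigl(M_\ww(\lambda)\bigr)^\delta<M_\vv(\lambda)$ for small $\lambda$, which guarantees that the whole helix, and in particular the fold, lies below the maximum of $W^s_{\loc}(\vv)\cap\Out(\ww)$. This is what forces the fold to pass from $W^+$ into $W^-$ and back as $\lambda$ varies, producing a tangency at each crossing. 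Your definition of $\mathcal{C}$ as ``families for which all collisions are quadratic and unfold transversely'' is a non-degeneracy condition \emph{after} one knows tangencies exist; it does not by itself produce any, and the existence step still requires the height comparison that the paper builds into $\mathcal{C}$.
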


The explicit description of the open set ${\mathcal C}$ is given in Section \ref{sec tangency}, after establishing some notation for the proof. 
Although a tangency may be removed by a small smooth perturbation, the presence of tangencies is densely persistent, a phenomenon similar to the Cocoon bifurcations observed in the Michelson system \cite{DIK, DIKS} observed by Lau.

\begin{theorem}
\label{teorema multitangency}
For a family $f_\lambda$ in the open set ${\mathcal C}$ of Theorem~\ref{teorema tangency}, and for each parameter value $\lambda_i$ corresponding to a 1-pulse heteroclinic tangency, there is a sequence of parameter values $\lambda_{ij}$ accumulating at $\lambda_i$ for which there is a 2-pulse heteroclinic tangency. This property is recursive in the sense that each $n$-pulse heteroclinic tangency is accumulated by $(n+1)$-pulse heteroclinic tangencies for nearby parameter values.
\end{theorem}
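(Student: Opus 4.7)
The plan is to bootstrap from Theorem~\ref{teorema tangency} via a recursive argument on the pulse count, using the geometry of the first-return map developed in Section~\ref{localdyn}. Fix $\lambda_i$ so that the flow of $f_{\lambda_i}$ has a 1-pulse heteroclinic tangency $T_i$ between $W^u(\ww)$ and $W^s(\vv)$. In a cross-section $S_q$ transverse to the primary connection $[\ww\to\vv]$, this tangency appears as a quadratic contact between the curve $W^s(\vv)\cap S_q$ and a spiral curve obtained by transporting a piece of $W^u(\ww)$ once around the cycle, exactly as in the proof of Theorem~\ref{teorema tangency}.

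First I would show that at $\lambda=\lambda_i$ there are infinitely many 2-pulse transverse heteroclinic connections accumulating on $T_i$. This follows by essentially the same geometric mechanism that gives assertion~\ref{item4} of Theorem~\ref{teorema T-point switching}: the second-iterate image of a small arc of $W^u(\ww)$ in $S_q$ consists of infinitely many spiral branches, one for each rotation count near $\vv$ on the second passage, that shrink and accumulate on the trace of $W^u(\ww)\cap S_q$. Near the tangency $T_i$ each branch crosses $W^s(\vv)\cap S_q$ in a pair of transverse points, which correspond to 2-pulse transverse connections of the flow.

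Next I would vary $\lambda$ slightly around $\lambda_i$ and apply to each transverse pair the same argument used in Theorem~\ref{teorema tangency}: the family $f_\lambda$ moves $W^s(\vv)\cap S_q$ relative to the iterated image of $W^u(\ww)$, and the derivative with respect to $\lambda$ of this relative position inherits the non-degeneracy that defines the open set $\mathcal{C}$. Hence, for each branch, the pair of transverse intersections coalesces at a single quadratic contact at some parameter value $\lambda_{ij}$ and then disappears. Because the branches accumulate on the trace of $W^u(\ww)\cap S_q$ at the tangency, the parameters $\lambda_{ij}$ accumulate on $\lambda_i$.

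The recursion proceeds identically: at each $\lambda_{ij}$, the 2-pulse tangency plays with respect to 3-pulse transverse connections the role that $\lambda_i$ played for 2-pulse ones, and one finds parameters $\lambda_{ijk}\to\lambda_{ij}$ at which 3-pulse tangencies occur. By induction each $n$-pulse tangency is accumulated by $(n+1)$-pulse tangencies. The main obstacle is uniform control of the iterated return map: as $n$ grows, so does the number of passages near $\vv$ and $\ww$, and one must ensure that the non-degeneracy responsible for the coalescence of transverse pairs into tangencies persists under iteration. This is handled by the explicit polar-type form of the local maps at $\vv$ and $\ww$ obtained in Section~\ref{localdyn}, which gives $C^r$ control of the iterates and of their dependence on $\lambda$, so that the relevant Jacobians do not degenerate at any finite pulse count.
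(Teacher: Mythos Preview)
Your recursive skeleton is the right one and matches the paper, but two specific steps do not go through as written.

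First, you mischaracterise the open set $\mathcal{C}$. It is not a transversality or $\lambda$-derivative condition; it is the height inequality $\left(\ymuw(\lambda)\right)^\delta<\ymsv(\lambda)$ for small $\lambda>0$, which ensures that the first-iterate helix $\alpha^\lambda=\eta\left(W^u_{\loc}(\ww)\cap\In^+(\vv)\right)$ lies entirely below the maximum of $W^s_{\loc}(\vv)\cap\Out(\ww)$. Tangencies in Theorem~\ref{teorema tangency} are produced not by a Melnikov-type speed condition but by an intermediate-value argument: the fold point of $\alpha^\lambda$ winds around $\Out(\ww)$ infinitely often as $\lambda\to 0$ and, thanks to the height inequality, is forced in and out of the region $W^-$. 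There is no non-degeneracy in $\lambda$ to ``inherit''.

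Second, and as a consequence, your claim that at $\lambda=\lambda_i$ each second-iterate branch already meets $W^s(\vv)$ in a transverse pair near the tangency $T_i$ is not guaranteed. At $\lambda_i$ the fold of $\alpha^{\lambda_i}$ sits exactly on $W^s_{\loc}(\vv)$; the second-iterate helices $\xi_j$ accumulate on $\alpha^{\lambda_i}$, so their folds are nearby but may lie on either side. The paper sidesteps this: it shows that the arcs $\alpha^\lambda([a_j,b_j])\subset W^+$, pushed by $\Psi_{\ww\to\vv}$ into $\In^+(\vv)$, satisfy the hypotheses of Lemma~\ref{Structures} and hence give genuine helices $\xi_j$ in $\Out(\ww)$ whose fold points accumulate on the fold of $\alpha^\lambda$. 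One then invokes the unfolding already established in Theorem~\ref{teorema tangency} (two 1-pulse connections for $\lambda>\lambda_i$, none for $\lambda<\lambda_i$): as $\lambda$ passes through $\lambda_i$ the fold of $\alpha^\lambda$ crosses $W^s_{\loc}(\vv)$, and since each fold of $\xi_j$ tracks it $C^1$-closely, each $\xi_j$ becomes tangent to $W^s_{\loc}(\vv)$ at some $\lambda_{ij}$ near $\lambda_i$. The accumulation $\lambda_{ij}\to\lambda_i$ follows from the uniform accumulation $\xi_j\to\alpha^\lambda$, not from any $\lambda$-derivative control. Your final paragraph about ``uniform control of the iterated return map'' is therefore aimed at the wrong obstacle; what is actually needed at each stage is that the new curves again satisfy the hypotheses of Lemma~\ref{Structures}, so that the fold-point mechanism can be reapplied.
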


Associated to the heteroclinic tangencies of Theorem \ref{teorema multitangency}, suspended horseshoes disappear as $\lambda$ goes to zero.

\begin{theorem}
\label{newhouse}
For a family $f_\lambda$ in the open set ${\mathcal C}$ of Theorem~\ref{teorema tangency},
there is a sequence of closed intervals $\Delta_n= [c_n,d_n] $,
with $0<d_{n+1},c_n<d_n$ and $\dpt\lim_{n\to\infty}d_n=0$, such that as $ \lambda$ decreases in $\Delta_n$, a suspended horseshoe is destroyed.
\end{theorem}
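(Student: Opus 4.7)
My plan is to attach to each 1-pulse heteroclinic tangency parameter $\lambda_n$ provided by Theorem~\ref{teorema tangency} one particular suspended hyperbolic horseshoe $\Lambda_n$, then show that $\Lambda_n$ exists for $\lambda$ just above $\lambda_n$, becomes non-hyperbolic at $\lambda = \lambda_n$, and has been destroyed for $\lambda$ just below $\lambda_n$. The interval $\Delta_n = [c_n, d_n]$ will be chosen tightly around $\lambda_n$.

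\textbf{Step 1 (Horseshoe associated with the $n$-th tangency.)} I fix a cross-section $S_q$ transverse to the 0-pulse connection $[\ww\to\vv]$ inside a tubular neighbourhood $\mathcal{T}$ of the cycle and consider the first return map $\Pi_\lambda : S_q \to S_q$, written as the composition of the local maps near $\vv$ and $\ww$ with the global transitions along the connections (as developed in Section~\ref{localdyn}). For $\lambda$ slightly larger than $\lambda_n$, the two transverse 1-pulse connections from Theorem~\ref{teorema tangency} meet $S_q$ at two separate points. Around each of them lies a horizontal strip $R_n^\pm(\lambda) \subset S_q$ whose image under $\Pi_\lambda$ crosses $W^s(\vv) \cap S_q$ transversely. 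The Conley--Moser / Shilnikov construction already used to establish the hyperbolic sets of Theorem~\ref{teorema T-point switching}(\ref{item5}) then produces a hyperbolic invariant set $\Lambda_n(\lambda) = \bigcap_{k \in \ZZ} \Pi_\lambda^k(R_n^+ \cup R_n^-)$ conjugate to the full shift on two symbols. Its suspension by the flow is the suspended horseshoe to be destroyed.

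\textbf{Step 2 (Collision at $\lambda_n$ and disappearance below.)} At $\lambda = \lambda_n$ the two 1-pulse connections collapse into the 1-pulse heteroclinic tangency of Theorem~\ref{teorema tangency}; the strips $R_n^+(\lambda)$ and $R_n^-(\lambda)$ coalesce and their common image touches $W^s(\vv) \cap S_q$ tangentially. The genericity condition built into the open set $\mathcal{C}$ ensures that this tangency unfolds with non-zero velocity in $\lambda$, so for $\lambda$ slightly below $\lambda_n$ the image of the merged strip is disjoint from $W^s(\vv) \cap S_q$ along the relevant arc. The Markov crossing condition that underlies the Smale horseshoe therefore fails on both strips simultaneously, and $\Lambda_n(\lambda)$ ceases to exist.

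\textbf{Step 3 (Construction of the intervals.)} By continuity of $\Pi_\lambda$ in $\lambda$ and the structural stability of transverse heteroclinic intersections, choose $d_n > \lambda_n$ small enough that $\Lambda_n(\lambda)$ is uniformly hyperbolic throughout $[\lambda_n, d_n]$, and, using Step 2, choose $c_n < \lambda_n$ small enough that $\Pi_\lambda(R_n^\pm) \cap W^s(\vv)$ is empty in the relevant region for all $\lambda \in [c_n, \lambda_n)$. An inductive choice with $c_n > d_{n+1} > 0$ and $\lambda_n \searrow 0$ produces closed intervals $\Delta_n = [c_n, d_n]$ with $\lim_n d_n = 0$, and within each $\Delta_n$ the horseshoe $\Lambda_n$ is destroyed as $\lambda$ decreases.

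\textbf{Main obstacle.} The delicate point is Step 2: one must exclude the possibility that, on the very same strips, a residual (Newhouse-type) hyperbolic invariant set persists for $\lambda < \lambda_n$. What rescues the argument is that the full-shift structure of $\Lambda_n$ is dictated \emph{precisely} by the two transverse crossings coming from the two 1-pulse connections, so their simultaneous quadratic annihilation is enough to kill the conjugacy with the shift. Any hyperbolic pieces that survive past $\lambda_n$ correspond to the higher-pulse tangencies described in Theorem~\ref{teorema multitangency} and live on thinner sub-strips; by shrinking $\Delta_n$ these are kept outside the interval, so that what is destroyed in $\Delta_n$ is specifically the horseshoe $\Lambda_n$ and not some finer residual structure.
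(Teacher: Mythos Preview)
Your approach ties the destruction intervals $\Delta_n$ to the 1-pulse heteroclinic tangency parameters $\lambda_n$ of Theorem~\ref{teorema tangency}, but this linkage is where the argument breaks. The 1-pulse tangency is a tangency of the \emph{invariant manifolds} $W^u(\ww)$ and $W^s(\vv)$; a horseshoe, by contrast, lives on a Markov pair of strips $R_n^\pm$ and is destroyed when $\Pi_\lambda(R_n^\pm)$ fails to cross $R_n^\pm$ themselves, not when it fails to cross $W^s(\vv)$. Your Step~1 never verifies the Conley--Moser crossing $\Pi_\lambda(R_n^\pm)\supset R_n^+\cup R_n^-$ (indeed the 1-pulse connection points lie \emph{on} $W^s(\vv)$, where $\Pi_\lambda$ is singular, so ``horizontal strips around them'' is already problematic), and Step~2 asserts without proof that the disappearance of the two 1-pulse connections kills this crossing. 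The ``Main obstacle'' paragraph acknowledges the issue but does not resolve it: the claim that the full-shift structure is ``dictated precisely'' by those two connections is exactly what would need to be shown.

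The paper proceeds quite differently and avoids the tangency parameters altogether. It first establishes (Lemmas~\ref{lemaHorizontalStrips}--\ref{lemaHorseshoeStrips}) that $\eta^{-1}(\quadr_\ww)\cap\quadr_\vv$ is a $\lambda$-\emph{independent} stack of horizontal strips $\hor_n$, while each image $R_\lambda(\hor_n)$ is a horseshoe-shaped strip whose maximum height is controlled by $\ymuw(\lambda)$ and hence tends to $0$ as $\lambda\to 0$. The heart of the proof is then Proposition~\ref{regular_intersections}: for a fixed strip $\hor_a$ one finds $\lambda_1<\lambda_2<\lambda_3$ with $R_{\lambda_3}(\hor_a)\cap\hor_a$ regular, $R_{\lambda_2}(\hor_a)\cap\hor_a$ irregular, and $R_{\lambda_1}(\hor_a)\cap\hor_a=\emptyset$, simply because the horseshoe strip has been flattened below $\hor_a$. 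The endpoints $c,d$ of $\Delta_1$ are the parameter values at which the horizontal boundaries of $R_\lambda(\hor_a)$ are tangent to those of $\hor_a$; recursion with smaller $\lambda_3$ gives the sequence $\Delta_n$. Thus the destroyed horseshoe is the one living on $\hor_a\cap R_\lambda(\hor_a)$, and its death is a direct consequence of the collapse $\ymuw(\lambda)\to 0$, not of any particular heteroclinic tangency.
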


A similar result has been formulated by Newhouse \cite{Newhouse74} and Yorke and Alligood \cite{YA} for the case of two dimensional diffeomorphisms in the context of homoclinic bifurcations
 in dissipative dynamics, with no references to the equivariance.  
A more precise formulation of the result is given in Section~\ref{bif}.
Applying the results of \cite{PT, YA} to this family, we obtain:

\begin{corollary}
\label{Consequences}
For a family $f_\lambda$ in the open set ${\mathcal C}$ of Theorem~\ref{teorema tangency},
and $\lambda$ in one of the intervals $\Delta_n$ of Theorem~\ref{newhouse}
the flow of $\dot{x}=f_\lambda(x)$ undergoes infinitely many saddle-node and period doubling bifurcations.
 \end{corollary}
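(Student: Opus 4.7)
The plan is to reduce the statement to the classical results of Palis--Takens \cite{PT} and Yorke--Alligood \cite{YA} on the cascades accompanying the destruction of horseshoes through heteroclinic tangencies. The basic strategy is to work with a first-return map to a cross-section inside the tubular neighbourhood $\mathcal{T}$, where the dynamics is two-dimensional, invoke the cited bifurcation results for families of planar diffeomorphisms, and then translate the bifurcations of fixed/periodic points of the return map into bifurcations of the corresponding periodic solutions of the flow.

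First, I would fix a cross-section $S_q$ at the point $q$ on the connection $[\ww\to\vv]$ as in Theorem~\ref{teorema T-point switching}\eqref{item5}, and consider the smooth one-parameter family of first-return maps $F_\lambda:\textrm{Dom}(F_\lambda)\subset S_q\to S_q$ induced by $f_\lambda$ for $\lambda$ in the interval $\Delta_n$ provided by Theorem~\ref{newhouse}. Periodic points of $F_\lambda$ are in one-to-one correspondence with periodic orbits of the flow that remain in $\mathcal{T}$, and the type (attracting, saddle, etc.) is preserved under suspension. The $k$-pulse heteroclinic tangencies provided by Theorems~\ref{teorema tangency} and \ref{teorema multitangency}, as $\lambda$ traverses $\Delta_n$, correspond to tangencies between the stable and unstable manifolds of fixed/periodic points of $F_\lambda$ that form the hyperbolic horseshoe destroyed inside $\Delta_n$.

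Second, I would verify the genericity hypotheses needed to apply \cite[\S3]{PT} and \cite{YA}: for $f_\lambda$ in the open set $\mathcal C$, the tangencies are quadratic and the family unfolds them generically (transverse crossing of $\lambda\mapsto$ tangency locus) --- this last condition is exactly the content of Theorem~\ref{teorema tangency}, which says that the 1-pulse tangency at $\lambda=\lambda_i$ is the collision of two transverse connections that disappear past $\lambda_i$. The same applies, recursively, to the $n$-pulse tangencies from Theorem~\ref{teorema multitangency}. Each such generic unfolding of a quadratic tangency produces, in a one-sided neighbourhood of the tangency parameter, a saddle-node bifurcation creating a pair of periodic points --- one of saddle type and one attracting --- and the attracting one subsequently undergoes a cascade of period-doubling bifurcations (cf.\ \cite[\S3]{PT}, \cite{YA}).

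Finally, since by Theorem~\ref{teorema multitangency} each $n$-pulse tangency in $\Delta_n$ is accumulated by $(n+1)$-pulse tangencies, and each of these contributes at least one saddle-node and one period-doubling bifurcation of $F_\lambda$, we obtain infinitely many such bifurcations as $\lambda$ decreases through $\Delta_n$. Passing back to the suspension gives the corresponding bifurcations for the periodic solutions of $\dot x=f_\lambda(x)$. The main technical obstacle I anticipate is checking that the tangencies in the sequence of Theorems~\ref{teorema tangency} and \ref{teorema multitangency} remain quadratic and that the unfolding is transverse with respect to $\lambda$ in an \emph{open} subset of families of $\zg{1}$-equivariant vector fields; the $\zg{1}$-symmetry forces pairs of tangencies but does not destroy the transversality of the unfolding in the parameter, so the standard arguments of \cite{PT, YA} can still be applied on the quotient by the finite group action. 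Once this is in place, the conclusion follows.
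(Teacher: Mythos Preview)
Your reduction to \cite{PT,YA} is the right endgame, but the input you feed into it is not the one those results need, and this creates a genuine gap.

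The tangencies produced by Theorems~\ref{teorema tangency} and~\ref{teorema multitangency} are tangencies between $W^u(\ww)$ and $W^s(\vv)$, i.e.\ between the two-dimensional invariant manifolds of the \emph{equilibria}. Your sentence identifying these with ``tangencies between the stable and unstable manifolds of fixed/periodic points of $F_\lambda$ that form the hyperbolic horseshoe'' is not correct: on the cross-section $S_q$ the equilibria do not appear as fixed points of the return map (the return time blows up near $W^s(\vv)$), and the Palis--Takens/Yorke--Alligood machinery in \cite[\S3]{PT}, \cite{YA} is formulated for homoclinic tangencies of \emph{periodic saddles} of a planar diffeomorphism, equivalently for the geometric destruction of a Smale horseshoe. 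A heteroclinic tangency of the equilibria does not, by itself, hand you the saddle-node/period-doubling cascade; you still need the intermediate step of producing the horseshoe-destruction geometry for the return map. A second, independent problem is that nothing places the parameters $\lambda_i$ (or the $\lambda_{ij}$) inside the interval $\Delta_n$: the $\Delta_n$ of Theorem~\ref{newhouse} are defined via the strip geometry of Proposition~\ref{regular_intersections}, not via the tangency parameters of Theorems~\ref{teorema tangency}--\ref{teorema multitangency}, and the paper establishes no inclusion between them.

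The paper's proof bypasses Theorems~\ref{teorema tangency} and~\ref{teorema multitangency} entirely and works directly with Proposition~\ref{regular_intersections}: one fixes a horizontal strip $\hor_a\subset\quadr_\vv$ and follows the horseshoe strip $R_\lambda(\hor_a)$ as $\lambda$ decreases through $\Delta_n=[c,d]$. At $\lambda=d$ the lower boundary of $R_\lambda(\hor_a)$ is tangent to the upper boundary of $\hor_a$, at $\lambda=c$ the upper boundary of $R_\lambda(\hor_a)$ is tangent to the lower boundary of $\hor_a$, and in between the intersection $\hor_a\cap R_\lambda(\hor_a)$ goes from a full two-legged horseshoe to the empty set. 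This is precisely the one-parameter horseshoe-annihilation picture to which \cite{PT,YA} apply directly, yielding the infinitely many saddle-node and period-doubling bifurcations inside $[c,d]$. If you want to repair your argument, the cleanest route is to drop the appeal to the equilibrium tangencies and instead invoke Proposition~\ref{regular_intersections} to obtain the strip picture, then cite \cite{PT,YA} for the cascade.
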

 
 With the additional hypothesis that the first return map to a suitable cross-section contracts area, we get:

 \begin{corollary}
\label{Sinks}
For a family $f_\lambda$ in the open set ${\mathcal C}$ of Theorem~\ref{teorema tangency},
if the  first return to a transverse section is area-contracting, then
for parameters $\lambda$ in an open subset of $\Delta_n$ with sufficiently large $n$, infinitely many attracting periodic solutions coexist.
 \end{corollary}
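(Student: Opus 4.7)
\bigskip

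\noindent\textbf{Proof plan for Corollary \ref{Sinks}.} The strategy is to combine Theorems~\ref{teorema multitangency} and~\ref{newhouse} with the classical Newhouse creation-of-sinks mechanism, applied to the first return map on a suitable cross-section. Fix $n$ large and choose the interval $\Delta_n$ supplied by Theorem~\ref{newhouse}. Inside $\Delta_n$ the dynamics of $f_\lambda$ near the Bykov cycle $\Gamma$ carries, on the one hand, hyperbolic suspended horseshoes in the process of annihilation and, on the other hand, a cascade of heteroclinic tangencies: for each $\lambda_i$ giving a $1$-pulse tangency one has accumulating $\lambda_{ij}$ giving $2$-pulse tangencies, and recursively $(n{+}1)$-pulse tangencies accumulate on each $n$-pulse tangency. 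This recursive accumulation is exactly what is required to generate persistent tangencies in the sense of Newhouse.

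First I would pass to a discrete-time model. Picking a cross-section $S_q$ at a point $q$ on the transverse connection $[\ww\to\vv]$ (as in assertion~\eqref{item5} of Theorem~\ref{teorema T-point switching}), the first return map $\Pi_\lambda\colon\mathrm{Dom}(\Pi_\lambda)\subset S_q\to S_q$ is a smooth $\zg{1}$-equivariant surface diffeomorphism on its domain of definition. The area-contracting hypothesis translates into $|\det D\Pi_\lambda|<1$ on $\mathrm{Dom}(\Pi_\lambda)$, so every hyperbolic periodic saddle of $\Pi_\lambda$ is \emph{dissipative} in the sense of Newhouse. The heteroclinic tangencies produced by Theorem~\ref{teorema multitangency} show up on $S_q$ as tangencies between the stable and unstable manifolds of a hyperbolic periodic point belonging to one of the horseshoes of \eqref{item5}: going around the cycle $n$ times realises a horseshoe with basic set $\Lambda_\lambda^{(n)}$ carrying such a saddle, and the $n$-pulse connection projects to a quadratic tangency between $W^u$ and $W^s$ of this saddle.

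Next I would verify the Newhouse thickness condition. The horseshoes $\Lambda_\lambda^{(n)}$ provided by \eqref{item5} and described in Section~\ref{bif} are conjugate to full shifts on an increasing number of symbols as $n\to\infty$, so their stable and unstable Cantor sets have thicknesses $\tau^s(\Lambda_\lambda^{(n)})$, $\tau^u(\Lambda_\lambda^{(n)})$ that can be made arbitrarily large by taking $n$ large enough; this is standard for hyperbolic sets with many branches and small distortion, and the relevant estimates are already contained in the geometric construction of the return map carried out in Section~\ref{localdyn} and Section~\ref{bif}. In particular, for $n$ sufficiently large we may arrange $\tau^s\cdot\tau^u>1$ on an open subset of $\Delta_n$, so Newhouse's theorem on persistent tangencies (cf.\ \cite{Newhouse74, Newhouse79} and the version for families in \cite[\S 6]{PT}) applies at the parameter values $\lambda_i\in\Delta_n$ furnished by Theorem~\ref{teorema multitangency}.

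Finally, I would invoke the Newhouse--Robinson theorem: in every neighbourhood of a parameter corresponding to a homoclinic tangency of a dissipative surface diffeomorphism with a sufficiently thick basic set, there is an open subset of parameters on a residual set of which the diffeomorphism has infinitely many coexisting attracting periodic orbits. Applied to $\Pi_\lambda$, this yields an open set $U_n\subset\Delta_n$ and, for $\lambda$ in a residual subset of $U_n$, infinitely many sinks of $\Pi_\lambda$; each such sink suspends to an attracting periodic solution of $\dot x=f_\lambda(x)$, giving the conclusion. The $\zg{1}$-equivariance does not hinder the argument: the sinks produced within one fundamental domain are automatically accompanied by their $\gamma_1$-images, which only doubles their number.

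The main obstacle I foresee is the thickness estimate in the second step: one must check that the natural Markov partition for the horseshoe at the $n$-th return really does yield Cantor sets whose thicknesses grow with $n$, uniformly in $\lambda\in\Delta_n$, and that the quadratic nature of the tangency survives in the presence of the residual $\zg{1}$-symmetry. Once these two technical points are settled, the rest is a direct appeal to the Newhouse machinery combined with the tangency cascade of Theorem~\ref{teorema multitangency}.
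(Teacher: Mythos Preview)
Your approach via Newhouse's persistent-tangency machinery is genuinely different from the paper's. The paper gives a much shorter and more elementary argument: it simply feeds the area-contracting hypothesis into Corollary~\ref{Consequences}. That corollary already produces, in each interval $\Delta_n=[c_n,d_n]$, a fixed point of $R_\lambda$ born at the first tangency of the boundaries of $R_\lambda(\hor_a)$ and $\hor_a$; area-contraction forces this fixed point to be a sink on an open parameter subinterval, and the ensuing period-doubling cascade is then invoked to obtain attracting orbits of period $2^k$ for all $k$. No thickness estimates enter, and neither Theorem~\ref{teorema tangency} nor Theorem~\ref{teorema multitangency} is used in the proof of Corollary~\ref{Sinks} at all. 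What your route would buy, if it goes through, is the stronger Newhouse conclusion (infinitely many coexisting sinks for a residual set of parameters in an open interval), which is arguably the more robust statement.

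There is, however, a real gap in your plan, and it is precisely the one you flag. The claim that ``their stable and unstable Cantor sets have thicknesses that can be made arbitrarily large by taking $n$ large enough'' is not justified by the number of symbols alone: a full shift on many symbols says nothing about thickness, which depends on the gap/bridge ratios of the Markov partition. In the present setting the strips $\hor_n$ are located at heights $h_n,m_n$ given by \eqref{maxHn}--\eqref{minHn}, so consecutive bridges and gaps scale by the same factor $e^{-2\pi/K}$; the resulting thickness is bounded rather than growing, and you would need a separate argument (or a different basic set) to secure $\tau^s\cdot\tau^u>1$. A second, smaller point: the tangencies in Theorems~\ref{teorema tangency}--\ref{teorema multitangency} are between $W^u(\ww)$ and $W^s(\vv)$, not immediately homoclinic tangencies of a periodic saddle of $\Pi_\lambda$; the paper obtains the latter from the irregular strip intersections of Proposition~\ref{regular_intersections} via \cite{PT,YA}, so if you pursue your route you should make that passage explicit rather than assert it.
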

 
 In Section~\ref{bif} we also describe a setting where the additional hypothesis holds.
When $\lambda$ decreases, the Cantor set of points of the horseshoes that remain near the cycle is losing topological entropy, as the set loses hyperbolicity, a phenomenon similar to that described in \cite{Gonchenko2007}. 
For $\lambda \approx 0$, return maps to appropriate domains close to the tangency are conjugate to H\'enon-like maps \cite{Colli, Kiriki, MV}. 
As $\lambda \rightarrow 0$, in $V^0$, infinitely many wild attractors may coexist with suspended horseshoes that are being destroyed.

The complete description of the dynamics near these bifurcations is an unsolvable problem:  arbitrarily small perturbations of any
differential equation with a quadratic heteroclinic tangency may lead
to the creation of new tangencies of higher order, and to the birth of quasi-stochastic attractors \cite{GST1993, GTS2001, GST2007, Gonchenko2012}.

\section{Local geometry and transition maps}
  \label{localdyn}
We analyse the dynamics near the network by deriving local maps that approximate the dynamics
near and between the two nodes in the network. 
In this section we establish the notation that will be used in the rest of the article and the expressions for the local maps. We start with appropriate coordinates near the two saddle-foci.

\subsection{Local coordinates}
In order to describe the dynamics around the Bykov cycles of $\Sigma^\lambda$, we use the local coordinates near the equilibria $\vv$ and $\ww$ introduced by Ovsyannikov and Shilnikov \cite{OS}.
Without loss of generality we assume that $\alpha_\vv=\alpha_\ww=1$. 

 \begin{figure}
\begin{center}
\includegraphics[height=6cm]{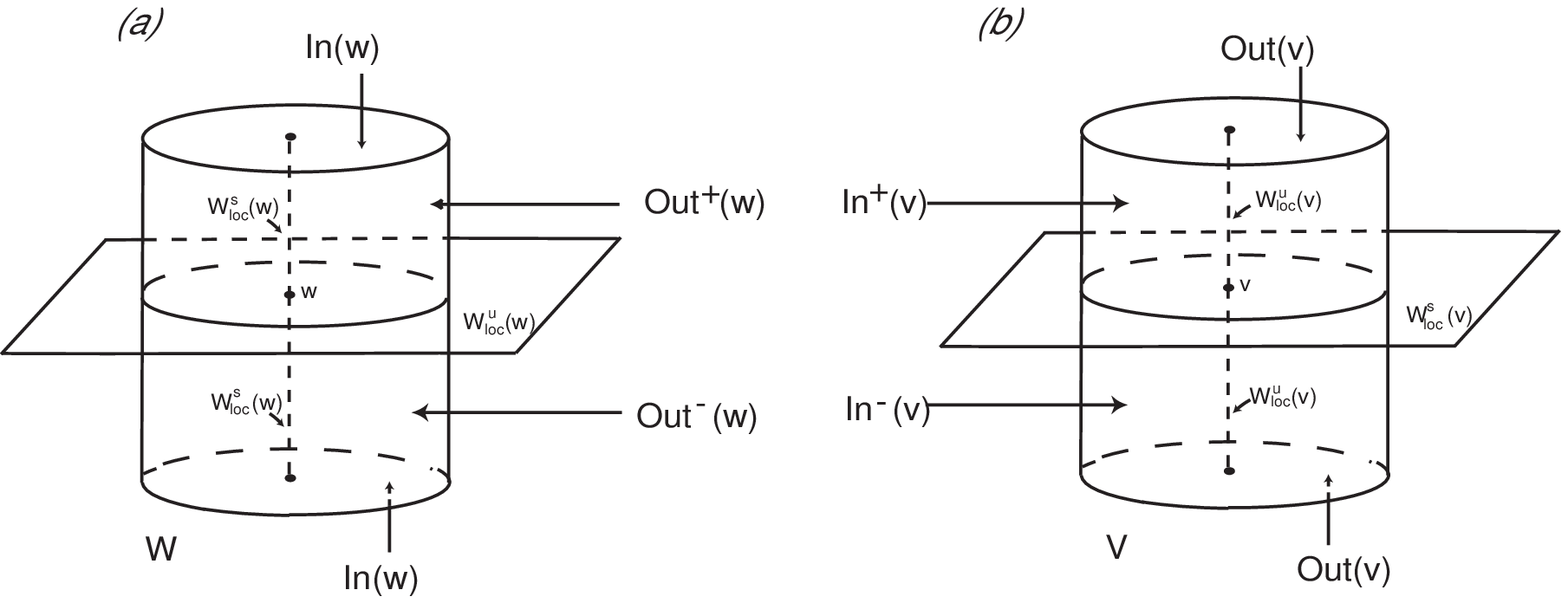}
\end{center}
\caption{\small  Cylindrical neighbourhoods of the saddle-foci $\ww$ (a) and $\vv$ (b). }
\label{neigh_vw}
\end{figure}

In these coordinates, we  consider cylindrical neighbourhoods  $V$ and $W$  in ${\EU}^3$ of $\vv $ and $\ww $, respectively, of radius $\rho=\varepsilon>0$ and height $z=2\varepsilon$ --- see Figure \ref{neigh_vw}.
After a linear rescaling of the variables, we  may also assume that  $\varepsilon=1$.
Their boundaries consist of three components: the cylinder wall parametrised by $x\in \RR\pmod{2\pi}$ and $|y|\leq 1$ with the usual cover $$ (x,y)\mapsto (1 ,x,y)=(\rho ,\theta ,z)$$ and two discs, the top and bottom of the cylinder. We take polar coverings of these disks $$(r,\varphi )\mapsto (r,\varphi , \pm 1)=(\rho ,\theta ,z)$$
where $0\leq r\leq 1$ and $\varphi \in \RR\pmod{2\pi}$.
The local stable manifold of $\vv$, $W^s(\vv)$, corresponds to the circle parametrised by $ y=0$. In $V$ we use the following terminology suggested in Figure \ref{neigh_vw}:
\begin{itemize}
\item
$\In(\vv)$, the cylinder wall of $V$,  consisting of points that go inside $V$ in positive time;
\item
$\Out(\vv)$, the top and bottom of $V$,  consisting of points that go outside $V$ in positive time.
\end{itemize}
We denote by $\In^+(\vv)$ the upper part of the cylinder, parametrised by $(x,y)$, $y\in[0,1]$ and by $\In^-(\vv)$ its lower part.

The cross-sections obtained for the linearisation around $\ww$ are dual to these. The set $W^s(\ww)$ is the $z$-axis intersecting the top and bottom of the cylinder $W$ at the origin of its coordinates. The set 
$W^u(\ww)$ is parametrised by $z=0$, and we use:

\begin{itemize}
\item
$\In(\ww)$, the top and bottom of $W$,  consisting of points that go inside $W$ in positive time;
\item
$\Out(\ww)$,  the cylinder wall  of $W$,  consisting of points that go inside $W$ in negative time, with $\Out^+(\ww)$ denoting its upper part, parametrised by $(x,y)$, $y\in[0,1]$ and $\Out^-(\ww)$  its lower part.
\end{itemize}

We will denote by $W^u_{\loc}(\ww)$ the portion of $W^u(\ww)$  that goes from $\ww$ up to $\In(\vv)$ not intersecting the interior of $V$ and by $W^s_{\loc}(\vv)$  the portion of $W^s(\vv)$ outside $W$ that goes directly  from $\Out(\ww)$ into $\vv$. The flow is transverse to these cross-sections and the boundaries of $V$ and of $W$ may be written as the closure of  $\In(\vv) \cup \Out (\vv)$ and  $\In(\ww) \cup \Out (\ww)$, respectively.

\subsection{Local maps near the saddle-foci}
Following \cite{Deng, OS}, the trajectory of  a point $(x,y)$ with $y>0$ in $\In(\vv)$, leaves $V$ at
 $\Out(\vv)$ at
\begin{equation}
\Phi_{\vv }(x,y)=\left(y^{\delta_\vv} + S_1(x,y; \lambda),-\frac{\ln y}{E_\vv}+x+S_2(x,y; \lambda) \right)=(r,\phi)
\qquad \mbox{where}\qquad 
\delta_\vv=\frac{C_{\vv }}{E_{\vv}} > 1,
\label{local_v}
\end{equation}
where $S_1$ and $S_2$ are smooth functions which depend on $\lambda$ and satisfy:
\begin{equation}
\label{diff_res}
\left| \frac{\partial^{k+l+m}}{\partial x^k \partial x^l  \partial \lambda ^m } S_i(x, y;\lambda)
\right| \leq C y^{\delta_\vv + \sigma - l},
\end{equation}
and $C$ and $\sigma$ are positive constants and $k, l, m$ are non-negative integers. Similarly, a point $(r,\phi)$ in $\In(\ww) \backslash W^s_{\loc}(\ww)$, leaves $W$ at $\Out(\ww)$ at
\begin{equation}
\Phi_{\ww }(r,\varphi )=\left(-\frac{\ln r}{E_\ww}+\varphi+ R_1(r,\varphi ; \lambda),r^{\delta_\ww }+R_2(r,\varphi; \lambda )\right)=(x,y)
\qquad \mbox{where}\qquad 
\delta_\ww=\frac{C_{\ww }}{E_{\ww}} >1 
\label{local_w}
\end{equation}
where $R_1$ and $R_2$ satisfy a  condition similar  to (\ref{local_v}). The terms $S_1$, $S_2$,  $R_1$, $R_2$ correspond to asymptotically small terms that vanish when $y$ and $r$ go to zero. A better estimate under a stronger eigenvalue condition has been obtained in \cite[Prop. 2.4]{Homburg}.

\subsection{Transition map along the connection $[\vv\rightarrow\ww]$}

Points in $\Out(\vv)$ near $W^u_{\loc}(\vv)$ are mapped into $\In(\ww)$  in a  flow-box along the each one of the connections  $[\vv\rightarrow\ww]$. 
Without loss of generality, we will assume that the transition $\Psi_{\vv \rightarrow \ww}: \Out(\vv) \rightarrow \In(\ww)$ does not depend on $\lambda$ and is modelled by the identity, which is compatible with hypothesis (P\ref{P6}).
Using a more general form for $\Psi_{\vv \rightarrow \ww}$ would complicate the calculations without any change in the final results.

The coordinates on $V$ and $W$ are chosen to have $[\vv\rightarrow\ww]$
connecting points with $z>0$ in  $V$ to points with $z>0$ in $W$.
We will denote by $\eta$ the map 
$\eta=\Phi_{\ww } \circ \Psi_{\vv \rightarrow \ww} \circ \Phi_{\vv }$.
Up to higher order terms, from \eqref{local_v} and \eqref{local_w}, its expression in local coordinates, for $y>0$, is
\begin{equation}
\label{eqeta}
\eta(x,y)=\left(x-K \ln y , y^{\delta} \right)
\qquad\mbox{with}\qquad
\delta=\delta_\vv \delta_\ww>1 
\qquad\mbox{and}
\qquad K= \frac{C_\vv+E_\ww}{E_\vv E_\ww} > 0.
\end{equation}

The choice of $\Psi_{\vv \rightarrow \ww}$  as the identity reflects the fact that the node have the same chirality.
In the case where the nodes have different chirality, them map $\Psi_{\vv \rightarrow \ww}$ reverses orientation.
This affects the form of the map $\eta$ and any subsequent results that use it.

 \subsection{Geometry of the transition map}
 Consider a cylinder $C$ parametrised by a covering $(\theta,y )\in  \RR\times[-1,1]$,  where $\theta $ is periodic.
A \emph{helix} on the cylinder $C$ \emph{accumulating on the circle} $y=0$ is a curve on $C$ without self-intersections, that is the image,  by the parametrisation $(\theta,y)$,
of a continuous map
$H:(a,b)\rightarrow \RR\times[-1,1]$, $H(s)=\left(H_\theta(s),H_y(s)\right)$, satisfying:
$$
\lim_{s\to a^+}H_\theta(s)=\lim_{s\to b^-}H_\theta(s)=\pm\infty
\qquad
\lim_{s\to a^+}H_y(s)=\lim_{s\to b^-}H_y(s)=0
$$
and such that there are $\tilde{a}\le \tilde{b}\in (a,b)$ for which both $H_\theta(s)$ and $H_y(s)$ are monotonic in each of the intervals $(a,\tilde{a})$ and $(\tilde{b},b)$.
It follows from the assumptions on the function $H_\theta$ that it has either a global minimum or a global maximum, since $\lim_{s\to a^+}H_\theta(s)=\lim_{s\to b^-}H_\theta(s)$. 
At the corresponding point, the projection of the helix into the circle $y=0$ is singular, a \emph{fold point} of the helix.
Similarly, the function $H_y$ always has a global maximum, that will be called the \emph{maximum height} of the helix. See Figure \ref{homoclinic1}.

\begin{figure}[ht]
\begin{center}
\includegraphics[width=15cm]{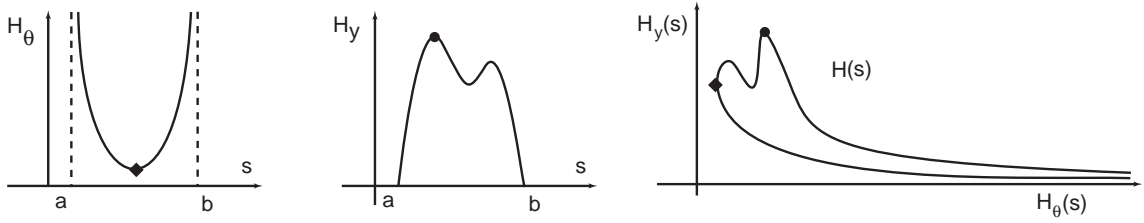}
\end{center}
\caption{\small A helix  is defined on a covering of the cylinder  by a smooth curve $(H_\theta(s),H_y(s))$ that turns around the cylinder infinitely many times as its height  $H_y$ tends do zero. It always contains a  fold point, indicated here by a diamond, and a point of maximum height, shown as a black dot.}
\label{homoclinic1}
\end{figure}

\begin{lemma}
\label{Structures} 
Consider a curve on one of the cylinder walls $\In(\vv)$ or $\Out(\ww)$, parametrised by the graph of a smooth function $h:[a,b]\rightarrow\RR$, where $b-a<2\pi$  with $h(a)=h(b)=0$, $h^\prime(a)>0$, $h^\prime(b)<0$ and $h(x)>0$ for all $x\in(a,b)$.
Let  $M$ be the global maximum value of $h$,  attained at a point $x_M\in(a,b)$.
Then, for the transition maps defined above, we have:
\begin{enumerate}
\item\label{helixOut}
 if the curve lies in $\In(\vv)$ then it is mapped by $\eta=\Phi _{\ww}\circ \Psi_{\vv \rightarrow \ww}\circ \Phi _{\vv}$ into  a helix on $\Out(\ww)$ accumulating on the circle  $\Out(\ww) \cap W^{u}_{\loc}(\ww)$,
 its maximum height is $M^\delta$ and
 it has a fold point at 
 $\eta(x_*,h(x_*))$ for some $x_*\in (a,x_M)$;
\item\label{helixIn}
 if the curve lies in $\Out(\ww)$ then  it is mapped by $\eta^{-1}$ into  a helix on $\In(\vv)$ accumulating on the circle  $\In(\vv) \cap W^{s}_{\loc}(\vv)$, its maximum height is $M^{1/\delta}$ and  it has a fold point at $\eta^{-1}(x_*,h(x_*))$ for some $x_*\in (x_M,b)$.
 \end{enumerate}
\end{lemma}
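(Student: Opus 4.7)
\medskip
\noindent\textbf{Proof plan for Lemma~\ref{Structures}.}

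The proof proceeds by direct substitution of the graph parametrisation into the explicit formulas for $\eta$ and $\eta^{-1}$, followed by a careful analysis of the monotonicity of each coordinate. I will write out the key steps for case \eqref{helixOut}; case \eqref{helixIn} is analogous, using the inverse
\begin{equation*}
\eta^{-1}(\theta,z) = \left(\theta + \tfrac{K}{\delta}\ln z,\; z^{1/\delta}\right)
\end{equation*}
which is obtained by solving \eqref{eqeta} for $(x,y)$.

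For case \eqref{helixOut}, I substitute $(x,y)=(x,h(x))$ into $\eta$ to obtain a parametric curve on $\Out(\ww)$,
\begin{equation*}
\eta(x,h(x)) = \bigl(\Theta(x),\,Y(x)\bigr), \qquad \Theta(x)=x-K\ln h(x), \quad Y(x)=h(x)^\delta, \qquad x\in(a,b).
\end{equation*}
The $Y$-coordinate is $h(x)^\delta$, so it attains its global maximum at $x_M$ with value $M^\delta$, and $Y(x)\to 0^+$ as $x\to a^+$ or $x\to b^-$, identifying $H_y$ in the definition of a helix. Since $h(a)=h(b)=0$ with $h>0$ on $(a,b)$, the logarithm term forces $\Theta(x)\to+\infty$ at both endpoints, matching the required asymptotic behaviour of $H_\theta$. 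These two facts also show that the image curve accumulates on the circle $\Out(\ww)\cap W^u_\loc(\ww)=\{y=0\}$.

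Next, I compute $\Theta'(x)=1-K\,h'(x)/h(x)$. Using $h(a)=0$ with $h'(a)>0$, a first-order expansion $h(x)\sim h'(a)(x-a)$ gives $h'(x)/h(x)\to+\infty$, hence $\Theta'(x)\to-\infty$ as $x\to a^+$, so $\Theta$ is strictly decreasing on some interval $(a,\tilde{a})$. Similarly, $h(b)=0$ with $h'(b)<0$ yields $h'(x)/h(x)\to-\infty$, so $\Theta'(x)\to+\infty$ as $x\to b^-$, giving a final interval $(\tilde{b},b)$ on which $\Theta$ is strictly increasing. Together with monotonicity of $Y$ near each endpoint (which is inherited directly from $h$), this verifies the regularity requirement in the definition of a helix, so the image is a helix accumulating on $\{y=0\}$ with maximum height $M^\delta$.

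The fold point is then located as follows. Because $\Theta'(x)\to-\infty$ as $x\to a^+$ and $\Theta'(x_M)=1-K\cdot 0=1>0$, by continuity there exists $x_*\in(a,x_M)$ with $\Theta'(x_*)=0$; equivalently $h'(x_*)/h(x_*)=1/K$. Taking the smallest such $x_*$ guarantees $\Theta$ is strictly monotonic on $(a,x_*)$, so $x_*$ is the fold point required by the definition. The positivity of $1/K$ together with $h(x_*)>0$ forces $h'(x_*)>0$, confirming $x_*\in(a,x_M)$ as claimed.

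For case \eqref{helixIn}, the same scheme applies with $\eta^{-1}$: the image is $(x+\tfrac{K}{\delta}\ln h(x),\,h(x)^{1/\delta})$. The maximum height is $M^{1/\delta}$, the endpoint limits of the angular coordinate are $-\infty$ (still satisfying the helix definition), and the derivative $1+\tfrac{K}{\delta}h'(x)/h(x)$ vanishes at a point $x_*$ where $h'(x_*)/h(x_*)=-\delta/K<0$; this forces $h'(x_*)<0$ and therefore $x_*\in(x_M,b)$.

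The calculations are elementary, so there is no genuine obstacle. The only point requiring care is the bookkeeping around the endpoints: one must use the one-sided derivative hypotheses $h'(a)>0$, $h'(b)<0$ to guarantee the correct asymptotic sign of $h'(x)/h(x)$, and therefore locate the fold point in the correct half of $(a,b)$. The asymmetry $x_*<x_M$ versus $x_*>x_M$ between the two cases is exactly a consequence of the opposite signs of $K$ and $-K/\delta$ in the two angular components.
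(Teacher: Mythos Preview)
Your proposal is correct and follows essentially the same route as the paper: direct substitution of the graph into the explicit formula for $\eta$ (resp.\ $\eta^{-1}$), endpoint asymptotics of $\Theta$ and $Y$, and the intermediate value theorem applied to $\Theta'(x)=1-Kh'(x)/h(x)$ between $x\to a^+$ and $x=x_M$ to locate the fold. The only item you omit is the remark that the image has no self-intersections because $\eta$ is a diffeomorphism; otherwise the arguments match.
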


This result depends strongly on the form of $\eta$, and therefore, on the chirality of  the nodes.
If the nodes had different chirality, the graph of $h$ would no longer be mapped into a helix;
 the curve $\eta\left(x, h(x)\right)$ would instead have a vertical tangent at infinitely many points, see \cite{LR}.

\begin{proof}
The graph of $h$ defines a curve on $\In(\vv)$ without self-intersections. 
Since $\eta$ is the transition map of a differential equation, hence a diffeomorphism, this curve is mapped by $\eta$ 
into a curve 
$H (x)=\eta\left(x, h(x)\right)=\left(H _\theta(x),H _y(x)\right)$ in $\Out(\ww)$ 
without self-intersections.
Using the expression \eqref{eqeta} for $\eta$, we get
\begin{equation}\label{alphaPrime}
H (x)=\left(x-K\ln h (x),\left(h (x)\right)^\delta\right) \qquad \text{and}
\qquad
 H ^\prime(x)=\left(1-\frac{K h^\prime(x)}{h(x)}, \delta\frac{ \left(h (x)\right)^\delta h^\prime(x)}{h (x)}\right).
\end{equation}
From this expression it is immediate that $$\lim_{x\to a^+}H_\theta(x)=\lim_{x\to b^-}H_\theta(x)=+\infty \quad \text{and} \quad
\lim_{x\to a^+}H_y(x)=\lim_{x\to b^-}H _y(x)=0.$$
Also, $$H _y(x_M)=M^\delta \ge \left(h(x)\right)^\delta=H _y(x)$$ for all $x\in \left(a ,b\right)$, so the curve lies below the level $y=M^\delta$ in $\Out(\ww)$. Since $h^\prime(b)<0$, there is an interval $(\tilde{b},b)$ where $h^\prime(x)<0$ and hence $H_\theta^\prime(x)>0$ and $H_y^\prime(x)<0$. Similarly, $h^\prime(x)>0$ on some interval $(a,\hat{a})$, where $H_y^\prime(x)>0$.
For the sign of  $H _\theta^\prime(x)$, note that $K h^\prime(a )>0=h  (a )$ and 
 $K h^\prime(x_M)=0<h  (x_M)$.
 Thus, there is a point $x_*\in\left(a ,x_M\right)$ where $h (x_*)=Kh^\prime(x_*)$ and $H _\theta^\prime(x)$ changes sign, this is a fold point. If $x_*$ is the minimum value of $x$ for which this happens, then locally the helix lies to the right of this point. This proves assertion~\eqref{helixOut}. The proof of assertion~ \eqref{helixIn} is similar, using the expression 
 \begin{equation}\label{eqEtaInverse}
 \eta^{-1}(x,y)=\left(x+\frac{K}{\delta} \ln y , y^{1/\delta} \right).
 \end{equation}
 In this case we get $$\lim_{x\to a^+}H_\theta(x)=\lim_{x\to b^-}H_\theta(x)=-\infty$$ and if $x_*$ is the largest value of $x$ for which the helix has a fold point, then locally the helix lies to the left of $\eta^{-1}(x_*,h(x_*))$. 
 \end{proof} 
 
 Note that if, instead of the graph of a smooth function, we consider continuous, positive and piecewise smooth curve $\alpha(s)$ without self-intersections from $\alpha(0)=(a,0)$ to $\alpha(1)=(b,0)$,
 we can apply similar arguments to show that both $\eta(\alpha)$ and $\eta^{-1}(\alpha)$ are helices.

 \subsection{Geometry of the invariant manifolds}\label{subsecInvariantManifolds}
 
 There is also a well defined transition map
$$ 
\Psi_{\ww \rightarrow \vv}^\lambda:\Out(\ww)\longrightarrow \In(\vv)
$$  
that depends on the $\textbf{Z}_2\zg{2}$-symmetry breaking parameter $\lambda$, where $\Psi_{\ww \rightarrow \vv}^0$ is the identity map.
We will denote by  $R_\lambda$ the map  $\Psi_{\ww \rightarrow \vv}^\lambda \circ \eta$, where it is well defined.  When there is no risk of ambiguity, we omit the superscript  $\lambda$. In this section we investigate the effect of $\Psi_{\ww \rightarrow \vv}^\lambda$ on the two-dimensional invariant manifolds of $\vv$ and $\ww$ for  $\lambda\ne 0$, under the assumption  (P\ref{P5.}).

For this, let $f_\lambda$ be an unfolding of $f_0$ satisfying (P\ref{P1})--(P\ref{P5.}).
For $\lambda\ne 0$, we introduce the notation,  see Figure~\ref{elipse}:
\begin{itemize}
\item 
$(P_\ww^1,0)$ and $(P_\ww^2,0)$ with $0<P_\ww^1<P_\ww^2<2\pi$ are the coordinates of the two points where the connections  $[\ww \rightarrow \vv]$ of Property~(P\ref{P5.})   meet $\Out(\ww)$;
\item
$(P_\vv^1,0)$ and $(P_\vv^2,0)$  with $0<P_\vv^1<P_\vv^2<2\pi$  are the coordinates of the two  points where $[\ww \rightarrow \vv]$ meets $\In(\vv)$;
\item
$(P_\ww^j,0)$ and $(P_\vv^j,0)$ are on the same trajectory for each $j=1,2$.
\end{itemize}

 \begin{figure}[ht]
\begin{center}
\includegraphics[height=12cm]{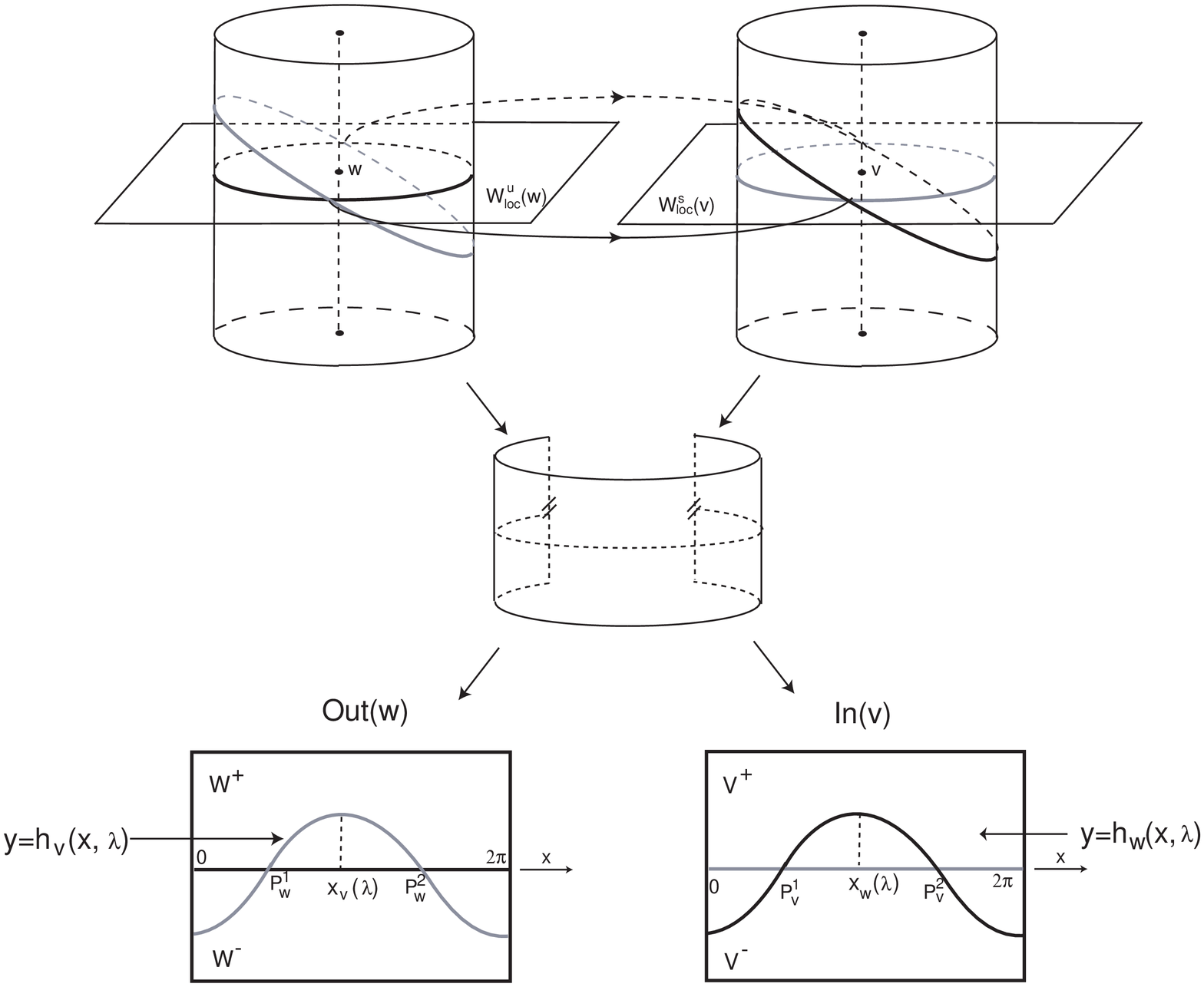}
\end{center}
\caption{\small For $\lambda$ close to zero, 
$W^s(\vv)$ intersects the wall $\Out(\ww)$ of the cylinder $W$ on a closed curve, given in local coordinates as the graph of a periodic function. Similarly, $W^u(\ww)$ meets $\In(\vv)$ on a closed curve
 --- this is the expected unfolding from the coincidence of the invariant manifolds at $\lambda=0$. }
\label{elipse}
\end{figure}

 By (P\ref{P5.}), for $\lambda \neq 0$, the manifolds $W^u(\ww)$ and $W^s(\vv)$ intersect transversely along the primary connections. For $\lambda$ close to zero, we are
assuming that $W^s_{\loc}(\vv)$ intersects the wall $\Out(\ww)$ of the cylinder $W$ on a closed curve as in Figure~\ref{elipse}.
It corresponds to the expected
unfolding from the coincidence of the manifolds $W^s(\vv)$ and $W^u(\ww)$ at $f_0$.
Similarly, $W^u_{\loc}(\ww)$ intersects the wall $\In(\vv)$ of the cylinder $V$ on a closed curve.
For small $\lambda>0$, these curves can be seen as graphs of smooth $2\pi$-periodic functions, for which we  make the following conventions:
\begin{itemize}
\item
$W^s_{\loc}(\vv)\cap \Out(\ww)$ 
is the graph of   $y=\wsv(x,\lambda)$, with $\wsv(P_\ww^j,\lambda)=0$, $j=1,2$;
\item
$W^u_{\loc}(\ww)\cap \In(\vv)$ is the graph
  of  $y=\wuw(x,\lambda)$, with $\wuw(P_\vv^j,\lambda)=0$, $j=1,2$;
 \item 
$\wsv(x,0)\equiv 0$ and $\wuw(x,0)\equiv 0$
\item for $\lambda>0$, we have
$\wsv ^\prime(P_\ww^1,\lambda)>0$, hence  
$  \wsv ^\prime(P_\ww^2,\lambda)<0$ and $\wuw ^\prime(P_\vv^1,\lambda)<0,\wuw ^\prime(P_\vv^2,\lambda)>0$.
\end{itemize}

The two points $(P_\ww^1,0)$ and $(P_\ww^2,0)$ divide the closed curve $y=\wsv(x,\lambda)$
  in two components, corresponding to different signs of the second coordinate.
 With the conventions  above, we get $\wsv (x,\lambda)>0$ for  $x\in\left(P_\ww^1,P_\ww^2\right)$.
Then the region $W^-$ in $\Out(\ww)$ delimited by  $W_{\loc}^s(\vv)$ and  $W_{\loc}^u(\ww)$ between $P_\ww^1$ and $P_\ww^2$ gets mapped by $ \Psi_{\ww \rightarrow \vv}^\lambda$ into $\In^-(\vv)$, while all other points  in  $\Out^+(\ww)$  are mapped into  $\In^+(\vv)$. 
We denote by $W^+$ the latter set,  of points in $\Out(\ww)$ with $0<y<1$ and $y>\wsv(x,\lambda)$ for $x\in\left(P_\ww^1,P_\ww^2\right)$.
The maximum value of $\wsv(x,\lambda)$ is  attained at some point 
$$
(x,y)= (\xmsv(\lambda),\ymsv(\lambda)) \qquad \text{with} \qquad P_\ww^1<\xmsv(\lambda)<P_\ww^2.
$$

Finally, let  $\ymuw(\lambda)$ be the maximum of $\wuw(x,\lambda)$, attained at a point 
 $\xmuw(\lambda)\in \left(P_\vv^2,P_\vv^1\right)$.  
 In order to simplify the writting, the following analysis is focused on the two Bykov cycles whose connection $[\vv \to \ww]$ lies in the subspace defined by $ x_3\geq0$. 
The same results, with minimal adaptations, hold for cycles obtained from the other connection.
 With this notation, we have:

\begin{proposition}\label{PropWuw}
Let $f_\lambda$ be family of vector fields satisfying (P\ref{P1})--(P\ref{P5.}).
For $\lambda\ne 0$ sufficiently small, the portion of $W^u_{\loc}(\ww)\cap \In(\vv)$ that lies in $\In^+(\vv)$
is mapped by $\eta$ into a helix in $\Out(\ww)$ accumulating on $W^u_{\loc}(\ww)$.
If $\ymuw(\lambda)$ is the maximum height of $W^u_{\loc}(\ww)\cap \In^+(\vv)$, then the maximum height of the helix is $\ymuw(\lambda)^\delta$. 
For each $\lambda>0$ there is a fold point in the helix that, as $\lambda$ tends to zero, turns around the cylinder wall $\Out(\ww)$  infinitely many times.
\end{proposition}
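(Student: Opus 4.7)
The plan is to apply Lemma~\ref{Structures}\eqref{helixOut} directly to the positive lobe of $W^u_{\loc}(\ww)\cap\In(\vv)$. First, I identify the interval on which $\wuw(\cdot,\lambda)$ is positive. From the conventions listed before the Proposition, $\wuw(P_\vv^j,\lambda)=0$ for $j=1,2$, with $\wuw'(P_\vv^1,\lambda)<0$ and $\wuw'(P_\vv^2,\lambda)>0$, so $\wuw$ is negative on $(P_\vv^1,P_\vv^2)$ and positive on the complementary arc. Using the $2\pi$-periodicity in $x$, I set $a=P_\vv^2$, $b=P_\vv^1+2\pi$ and $h(x)=\wuw(x,\lambda)$, so that $h(a)=h(b)=0$, $h'(a)>0$, $h'(b)<0$, $h>0$ on $(a,b)$, and $b-a=2\pi-(P_\vv^2-P_\vv^1)<2\pi$. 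The maximum of $h$ is $M=\ymuw(\lambda)$, attained at $x_M=\xmuw(\lambda)\in(a,b)$. This verifies all hypotheses of Lemma~\ref{Structures}.

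Applying part~\eqref{helixOut} of Lemma~\ref{Structures} immediately yields the first two assertions: the image $\eta(\text{graph of }h)$ is a helix on $\Out(\ww)$ accumulating on the circle $\Out(\ww)\cap W^u_{\loc}(\ww)$ (i.e.\ $y=0$), and its maximum height equals $M^\delta=\ymuw(\lambda)^\delta$. The lemma also produces a fold point at $\eta(x_*,h(x_*))$ for some $x_*\in(a,x_M)$, determined by the condition $h(x_*)=Kh'(x_*)$ obtained by setting $H_\theta'=0$ in \eqref{alphaPrime}.

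For the last assertion, I read off the $\theta$-coordinate of the fold from \eqref{alphaPrime}, namely $H_\theta(x_*)=x_*-K\ln h(x_*)$. Since $\wuw(x,0)\equiv 0$ and $f_\lambda$ depends smoothly on $\lambda$, we have $\ymuw(\lambda)\to 0$ as $\lambda\to 0$; in particular $0<h(x_*)\le\ymuw(\lambda)\to 0$, so $-K\ln h(x_*)\to+\infty$, while $x_*\in(a,b)$ remains bounded. Hence $H_\theta(x_*)\to+\infty$. Since the angular coordinate on $\Out(\ww)$ is taken modulo $2\pi$, the fold point winds around the cylinder wall infinitely many times as $\lambda\to 0^+$.

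The main bookkeeping obstacle is the initial verification that the positive lobe of $\wuw(\cdot,\lambda)$ sits inside an arc of length strictly less than $2\pi$, with the correct signs of one-sided derivatives at the endpoints, so that Lemma~\ref{Structures} applies in the form stated. Once the correct choice $a=P_\vv^2,\ b=P_\vv^1+2\pi$ is fixed using the sign conventions on $\wuw'$ at the primary connections, everything else follows routinely from Lemma~\ref{Structures} and the explicit form of $\eta$ in \eqref{eqeta}; no additional estimates on higher-order corrections are needed, since only the leading behaviour $-K\ln h(x_*)$ drives the divergence of the fold angle.
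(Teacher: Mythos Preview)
Your proof is correct and follows essentially the same route as the paper's: apply Lemma~\ref{Structures}\eqref{helixOut} to the positive lobe of $\wuw(\cdot,\lambda)$ to obtain the helix and its maximum height, then use $0<h(x_*)\le \ymuw(\lambda)\to 0$ to force $x_*-K\ln h(x_*)\to+\infty$. Your write-up is in fact more careful than the paper's in making explicit the interval $[a,b]=[P_\vv^2,P_\vv^1+2\pi]$ and checking the sign conventions needed for Lemma~\ref{Structures}, but the argument is the same.
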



\begin{proof}
That $\eta$ maps $W^u_{\loc}(\ww)\cap \In^+(\vv)$ into a helix, and the statement about its maximum height follow directly by applying assertion~\eqref{helixOut} of Lemma~\ref{Structures} to $\wuw$.  
For the  fold point in the helix, let $x_*(\lambda)$ be its first coordinate. From the expression \eqref{eqeta} of 
$\eta$ it follows that $$x_*(\lambda)=x_\lambda-K\ln \wuw(x_\lambda)$$ for some 
$x_\lambda\in(P_\vv^2, x_\ww(\lambda))$ and with 
$\wuw(x_\lambda, \lambda)\le \wuw(x_\ww(\lambda),\lambda)=M_\ww(\lambda)$ and hence, 
$$-K\ln \wuw(x_\lambda,\lambda)\ge -K\ln M_\ww(\lambda).$$  
Since $f_\lambda$ unfolds $f_0$, then $\lim_{\lambda\to 0}M_\ww(\lambda)=0$, hence
$\lim_{\lambda\to 0}-K\ln \wuw(x_\lambda,\lambda)=\infty$ and therefore, 
 the fold point  turns around the cylinder $\Out(\ww)$  infinitely many times.
\end{proof}

The Hypothesis (P\ref{P6}) about chirality of the nodes
 is essential for
 Lemma~\ref{Structures} and Proposition~\ref{PropWuw}. 
If  we had taken the rotation in $W$ with the opposite orientation to that in $V$, the rotations would cancel out and the curve $H(x)$ defined in (\ref{alphaPrime}) would no longer  be a helix, since the angular coordinate $H_\theta$ would not be monotonic. 

\section{Heteroclinic Tangencies}
\label{sec tangency}

Using the notation and results of Section~\ref{localdyn} we can now discuss the tangencies of the invariant manifolds and prove Theorems~\ref{teorema tangency} and \ref{teorema multitangency}.
As remarked in  Section~\ref{subsecInvariantManifolds}, since $f_\lambda$ unfolds $f_0$, then the maximum heights, $\ymsv(\lambda)$ of $W^s_{\loc}(\vv)\cap \Out(\ww)$, and $\ymuw(\lambda)$ of $W^u_{\loc}(\ww)\cap \In(\vv)$, satisfy:
$$
\lim_{\lambda\to 0}\ymsv(\lambda)=\lim_{\lambda\to 0}\ymuw(\lambda)=0.
$$
We make the additional assumption  that  $\left(\ymuw(\lambda)\right)^\delta$ tends to zero faster than $\ymsv(\lambda)$. This condition defines the open set  ${\mathcal C}$
of unfoldings $f_\lambda$ that we need for the statement of Theorem~\ref{teorema tangency}. 
%

The subset ${\mathcal C}$  of Theorem~\ref{teorema tangency} is the set of  families $f_\lambda$ of vector fields satisfying (P\ref{P1})--(P\ref{P5.}) for which 
  there is a value $\lambda_*>0$ such that  for $0<\lambda<\lambda_*$ we have $\left(\ymuw(\lambda)\right)^\delta<\ymsv(\lambda)$.
  Then  ${\mathcal C}$ is an open subset, in the $C^3$ topology,  of the set of families $f_\lambda$ of vector fields satisfying (P\ref{P1})--(P\ref{P5.}).

\begin{figure}[ht]
\begin{center}
\includegraphics[width=14cm]{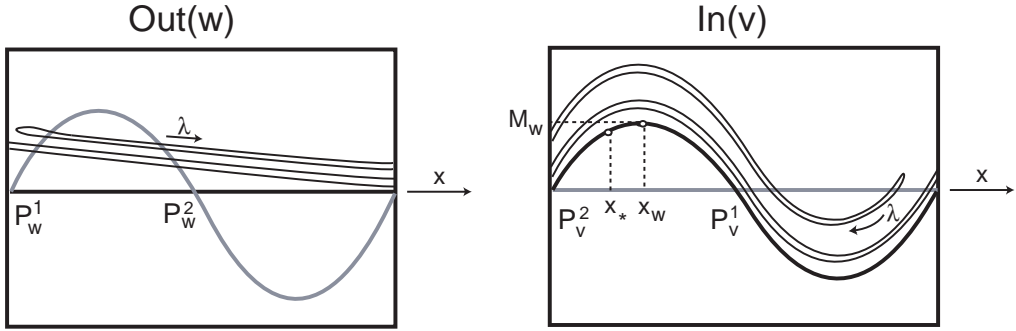}
\end{center}
\caption{\small Left: when $\lambda$ decreases, the fold point of the helix $\alpha^\lambda(x)\in \Out(\ww)$ moves to the right and  for  $\lambda=\lambda_i$ it is tangent to $W^s_{\loc}(\vv)$ creating a 1-pulse tangency. 
Right:  $\Psi_{[\ww\rightarrow\vv]}$ maps the helix $\alpha^\lambda(x)$ close to $W^u_{\loc}(\ww)\cap \In(\vv)$, creating several curves that satisfy the hypotheses of Lemma~\ref{Structures} \eqref{helixOut}. These curves are again mapped by $\eta$ into helices in $\Out(\ww)$ creating 2-pulse heteroclinic tangencies.}
\label{homoclinic2}
\end{figure}

\subsection{Proof of Theorem~\ref{teorema tangency}}

Suppose $f_\lambda\in{\mathcal C}$.
By Proposition~\ref{PropWuw}, the curve 
$$\alpha^\lambda(x)=\eta\left(x,\wuw(x,\lambda),\lambda\right)=\left(\alpha_1^\lambda(x),\alpha_2^\lambda(x)\right), \qquad x\in\left(P_\vv^2,P_\vv^1\right)\pmod{2\pi}$$
is a helix in $\Out^+(\ww)$ and has at least one fold point at $x=x_*(\lambda)$.
The second coordinate of the helix satisfies $0<\alpha_2^\lambda(x)<\ymuw(\lambda)^\delta$ for all $x\in\left(P_\vv^2,P_\vv^1\right)\pmod{2\pi}$ and all positive $\lambda<\lambda_*$.
Since $f_\lambda\in{\mathcal C}$, then $\alpha_2^\lambda(x)<\ymsv(\lambda)$ for all $x$ and  all positive $\lambda<\lambda_*$.

Moreover, since the fold point $\alpha^\lambda(x_*(\lambda))$ turns around $\Out(\ww)$ infinitely many times as $\lambda$ goes to zero,
given any $\lambda_0<\lambda_*$
there   exists a positive value $\lambda_R<\lambda_0$ such that 
$\alpha^\lambda(x_*(\lambda_R))$
lies in $W^+$, the region in $\Out(\ww)$ between $W_{\loc}^s(\vv)$ and  $W_{\loc}^u(\ww)$ 
that gets mapped into the upper part of $\In(\vv)$. 
Since the second coordinate of the 
fold point is less than the maximum of $\wsv$,
 there is   a positive value $\lambda_L<\lambda_R$ such that $\alpha^\lambda(x_*(\lambda_L))$ lies in $W^-$,
  the region in $\Out(\ww)$  that gets mapped into the lower part of $\In(\vv)$, whose boundary  contains the graph of $\wsv$. 
Therefore,  the curve $\alpha^\lambda(x_*(\lambda))$ is tangent to the graph of $\wsv(x,\lambda)$ at some point  $\alpha(x_*(\lambda_1))$ with $\lambda_1\in\left( \lambda_L,\lambda_R\right)$.

We have thus shown  that given $\lambda_0>0$, there is  some positive $\lambda_1<\lambda_0$,
for which the image of the curve $W_{\loc}^u(\ww)\cap \In(\vv)$ by  $\eta$ is tangent to $W_{\loc}^s(\vv)\cap \Out(\ww)$, creating a 1-pulse heteroclinic tangency. 
Two transverse 1-pulse  heteroclinic connections exist for $\lambda>\lambda_1$ close to $\lambda_1$. These connections come together at the tangency and disappear.

By Proposition \ref{PropWuw}, as  $\lambda$ goes to zero, the fold point $\alpha^\lambda(x_*(\lambda))$  turns around the cylinder $\Out(\ww)$  infinitely many times, thus going in and out of $W^-$.
Each times it crosses the boundary, a new tangency occurs.
Repeating the argument above yields the sequence $\lambda_i$ of  parameter values for which there is a 1-pulse heteroclinic tangency and this completes the proof of the main statement of Theorem~\ref{teorema tangency}.

On the other hand,  as $\lambda$ goes to zero,  the maximum height of the  helix, $\left(\ymuw(\lambda)\right)^\delta$ also tends to zero. This implies that the second coordinate of the points 
$\alpha^\lambda(x_*(\lambda_i))\in \Out(\ww)$ where there is a  1-pulse  heteroclinic tangency tends to zero as $i$ goes to infinity. This shows that the tangency approaches the two-dimensional  connection $[\ww\to\vv]$ that exists for $\lambda=0$.
\Qed

A crucial fact in the proof of Theorem~\ref{teorema tangency} is that $W^u_{\loc}(\ww)\cap \In(\vv)$ is the graph of a function with a single maximum,  and that this maximum goes to 0 as $\lambda$ goes to 0.
This follows because the family  $f_\lambda$  unfolds the more symmetric vector field $f_0$.
We could make the assumption on the invariant manifold directly, but it is  the context of  symmetry-breaking that makes it natural.

The construction in the proof of Theorem~\ref{teorema tangency} may be extended to obtain multipulse tangencies, as follows:

\subsection{Proof of Theorem~\ref{teorema multitangency}}
Look at $W^u_{\loc}(\ww)\cap \In(\vv)$, the graph of $\wuw(x,\lambda)$.
Since $\wuw ' (P_\vv^1,\lambda)<0$, there is an interval $[\tilde{x}, P_\vv^1]\subset  [\xmuw(\lambda), P_\vv^1]$ where the map $\wuw$ is monotonically decreasing. 
 Note that the hypothesis of different chirality is essential here.
Therefore, we may define infinitely many intervals where $\alpha^\lambda(x)=\eta\left(x,\wuw(x,\lambda)\right)$ lies in $W^+$. 
More precisely, we have two sequences, $(a_j)$ and $(b_j)$ in $[\tilde{x}, P_\vv^1]$ such that:
\begin{itemize}
\item
$a_j<b_j<a_{j+1}$\quad with \quad $\lim_{j\to\infty} a_j=P_\vv^1$;
\item
$\alpha^\lambda(a_j)$ and $\alpha^\lambda(b_j)\in W^s_{loc}(\vv)\cap \Out^+(\ww)$;
\item
if $x\in(a_j,b_j)$ then $\alpha^\lambda(x)\in W^+$;
\item
the curves $\alpha^\lambda\left([a_j,b_j]\right)$ accumulate uniformly on $W^u_{\loc}(\ww)\cap \Out(\ww)$ as $j\to\infty$.
\end{itemize}
Hence, each one of the curves $\alpha^\lambda\left([a_j,b_j]\right)$ is mapped by $\Psi_{\ww \rightarrow \vv}$ into the graph of a function $h$ in $\In(\vv)$ satisfying the conditions of Lemma~\ref{Structures},
and hence each one of these curves  is mapped by $\eta$ into a  helix $\xi_j(x)$, $x\in(a_j,b_j)$.
 
 \begin{figure}[ht]
\begin{center}
\includegraphics[scale=.6]{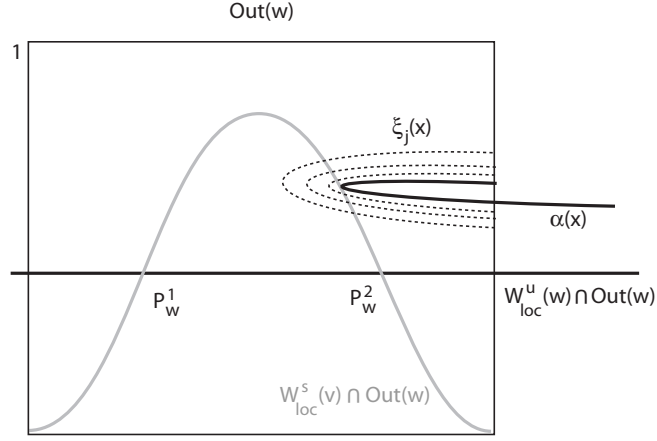}
\end{center}
\caption{\small Curves in the proof of Theorem~\ref{teorema multitangency}:  for $\lambda=\lambda_i$ the curve $\alpha^\lambda(x)=\eta(W^u_{\loc}(\ww)\cap \In^+(\vv))$ (solid black curve) is tangent to $W^s_{\loc}(\vv)$ (gray curve) in $\Out(\ww)$.
The curves $\xi_j(x)$ (dotted) accumulate on $\alpha^\lambda(x)$. Very small changes in $\lambda$ make them tangent to $W^s_{\loc}(\vv)$ creating 2-pulse heteroclinic tangencies.} 
\label{acumula}
\end{figure}

Let $\lambda_i$ be a parameter value for which  $\dot{x}=f_{\lambda_i}(x)$ has a 1-pulse heteroclinic tangency as stated in Theorem~\ref{teorema tangency}. 
As $j \rightarrow + \infty$, the helices $\xi_j(x)$ accumulate on the helix of Theorem~\ref{teorema tangency} as drawn in Figure~\ref{acumula}, hence the fold point of $\xi_j(x)$ is arbitrarily close to the  fold point of $\eta(x,\wuw(x,\lambda))$. 
 The arguments in the proof of Theorem~\ref{teorema tangency} show that  a small change in the parameter $\lambda$ makes the new helix tangent to $W^s_{\loc}(\vv)$ as in Figure~\ref{homoclinic2}.
 For each $j$ this creates  a 2-pulse heteroclinic tangency  at $\lambda=\lambda_{ij}$.
 Since the the helices $\xi_j(x)$ accumulate on $\alpha^{\lambda_i}(x)$, it follows that
 $\lim_{j \in \NN} \lambda_{ij}=\lambda_i$.
 
 Finally, the argument may be applied  recursively to show that each $n$-pulse heteroclinic tangency is accumulated by $(n+1)$-pulse heteroclinic tangencies for nearby parameter values.

\Qed

If $\lambda^\star \in \RR$ is such that the flow of $\dot{x}=f_{\lambda^\star}(x)$ has a heteroclinic tangency, when $\lambda$ varies near $\lambda^\star$, we find the creation and the destruction of horseshoes that is accompanied by Newhouse phenomena \cite{Gonchenko2007, YA}. In terms of numerics, we know very little about the geometry of these  attractors, we also do not know the size and the shape of their basins of attraction. Basin boundary metamorphoses and explosions of chaotic saddles as those described in \cite{RAOY} are expected.

\section{Bifurcating dynamics}
\label{bif}

We discuss here the geometric constructions that determine the global dynamics near a Bykov cycle, in order to prove Theorem~\ref{newhouse}.
For this we need some preliminary definitions and more information on the geometry of the transition maps. 
First, we adapt the definition of horizontal strip in \cite{GH} to serve our purposes:
for $\tau>0$ sufficiently small, in the local coordinates of the walls of the cylinders $V$ and $W$, consider the rectangles:
$$
\quadr_\vv=  [P_\vv^2-\tau, P_\vv^1+\tau] \times [0, 1]\subset \In(\vv)
\qquad \text{and} \qquad \quadr_\ww=  [P_\ww^2-\tau, P_\ww^1+\tau] \times [0, 1] \subset \Out(\ww)
$$
with the conventions  $-\pi<P_\vv^2-\tau< P_\vv^1+\tau\le\pi$ and
$-\pi<P_\ww^2-\tau<P_\ww^1+\tau\le \pi$.

A \emph{horizontal strip} in $\quadr_\vv$ is a subset of  $\In^+(\vv)$ of the form
 $$
 \hor=\left\{(x,y)\in In^+(\vv):\quad  x \in [P_\vv^2-\tau, P_\vv^1+\tau]\qquad \text{and} \quad y \in [u_1(x), u_2(x)]\right\},
 $$
where $u_1, u_2: [P_\vv^2-\tau, P_\vv^1+\tau] \rightarrow (0,1]$ are smooth maps such that $u_1(x)<u_2(x)$ for all $x \in [P_\vv^2-\tau, P_\vv^1+\tau]$. The graphs of  the $u_j$ are called the \emph{horizontal boundaries} of $\hor$
and the segments $\left(P_\vv^2-\tau,y\right)$, $u_1(P_\vv^2-\tau)\le y \le u_2(P_\vv^2-\tau)$ and 
$\left(P_\vv^1+\tau,y\right)$, with $u_1(P_\vv^1+\tau)\le y \le u_2(P_\vv^1+\tau)$ are its \emph{vertical boundaries}. Horizontal and vertical boundaries intersect at four \emph{vertices}.
The  \emph{maximum height} and the  \emph{minimum height}   of $\hor$ are, respectively
$$
\max_{x \in [P_\vv^2-\tau, P_\vv^1+\tau]}u_2(x)
\qquad\qquad
\min_{x \in [P_\vv^2-\tau, P_\vv^1+\tau]}u_1(x).
$$
Analogously  we may define  a \emph{horizontal strip} in $\quadr_\ww \subset \Out^+(\ww)$. 

A \emph{horseshoe strip} in $\quadr_\vv$ is a subset of $\In^+(\vv)$ of the form
$$
\left\{ (x,y)\in \In^+(\vv):\quad x\in[a_2,b_2], \quad y \in [u_1(x), u_2(x)]\right\},
 $$
 where
 \begin{itemize}
\item $[a_1, b_1] \subset [a_2, b_2] \subset [P_\vv^2-\tau, P_\vv^1+\tau]$;
\item $u_1(a_1)=u_1(b_1)=0$;
\item $u_2(a_2)=u_2(b_2)=0$.
\end{itemize}
The boundary of $\hor$ consists of the graph of $u_2(x)$, $x\in[a_2,b_2]$, the graph of $u_1(x)$,  $x\in[a_1,b_1]$, together with the two pieces of $W^s_{\loc}(\vv)\cap \In(\vv)$ with $x\in[a_2,a_1]$ and $x\in[b_1,b_2]$.

\subsection{Horseshoe strips in $\quadr_\vv$}

We are interested in the dynamics of points whose trajectories start in $\quadr_\vv$ and return to $\In^+(\vv)$ arriving at $\quadr_\vv$.

\begin{figure}[ht]
\begin{center}
\includegraphics[scale=.6]{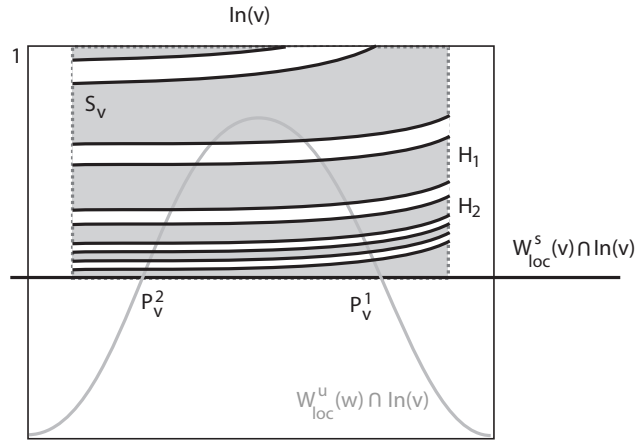}
\end{center}
\caption{\small For $\lambda>0$ sufficiently small,  the set $\eta^{-1}(\quadr_\ww)\cap \quadr_\vv$ has infinitely many connected components ({gray region}), all   of which, except maybe for the top ones, define horizontal strips  in $\quadr_\vv$ accumulating on $W^s_{loc}(\vv)\cap \In(\vv)$ ({gray} curve).}
\label{strips}
\end{figure}

\begin{lemma}\label{lemaHorizontalStrips}
The set $\eta^{-1}(\quadr_\ww)\cap \quadr_\vv$ has infinitely many connected components all of which are horizontal strips in  $\quadr_\vv$ accumulating on $W^s_{\loc}(\vv)\cap \In(\vv)$, except maybe for a finite number. The horizontal boundaries of these strips are graphs of monotonically increasing functions of $x$. 
\end{lemma}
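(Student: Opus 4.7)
The plan is to compute $\eta^{-1}(\quadr_\ww)\cap \quadr_\vv$ explicitly using the formula \eqref{eqeta} for $\eta$, and then read off the structure of the horizontal strips from the resulting exponential expressions.

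First I would characterise the preimage via the angular condition. A point $(x',y')\in \quadr_\vv$ with $y'>0$ satisfies $\eta(x',y')\in \quadr_\ww$ if and only if $(y')^\delta\le 1$ (which is automatic) and the first coordinate $x'-K\ln y'$ represents, modulo $2\pi$, a point of $[P_\ww^2-\tau,P_\ww^1+\tau]$. Lifting $\quadr_\ww$ to the universal cover of the cylinder $\Out(\ww)$, the arc bounding $\quadr_\ww$ becomes an interval $[A,B]\subset\RR$ with $B-A<2\pi$, and the angular condition unwraps to the disjoint union of conditions
\[
 A+2\pi n \;\le\; x'-K\ln y' \;\le\; B+2\pi n,\qquad n\in\ZZ.
\]
Solving for $y'$ and using that $K>0$, each $n$ yields a set
\[
\hor_n=\left\{(x',y')\in\quadr_\vv\ :\ u_1^n(x')\le y' \le u_2^n(x')\right\},
\]
with
\[
u_1^n(x')=\exp\!\left(\frac{x'-B-2\pi n}{K}\right),\qquad
u_2^n(x')=\exp\!\left(\frac{x'-A-2\pi n}{K}\right).
\]
These are smooth and strictly monotonically increasing in $x'$, which immediately gives the final claim of the lemma about the boundaries. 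Also $u_1^n<u_2^n$ follows from $B-A<2\pi$, so each $\hor_n$ is a nontrivial strip in the cover.

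Next I would show that for all but finitely many $n$ the strip $\hor_n$ is a genuine horizontal strip contained in $\quadr_\vv$, and that the remaining, ``truncated'' sets contribute only finitely many components. Since the $x'$-coordinate of $\quadr_\vv$ ranges over an interval of length less than $2\pi$, the condition $u_2^n(x')\le 1$ for every $x'\in[P_\vv^2-\tau,P_\vv^1+\tau]$ reduces to a single linear inequality $n\ge n_0$ for some $n_0\in\ZZ$. For $n<n_0$ the strip is cut by the top $y'=1$ of $\quadr_\vv$, giving at most finitely many connected components that may fail to be complete horizontal strips in the sense of the definition (the graphs $u_i^n$ being truncated). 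For $n\ge n_0$ the whole strip $\hor_n$ lies in $\In^+(\vv)\cap\quadr_\vv$ with horizontal boundaries given by the full graphs of the monotonically increasing functions $u_1^n,u_2^n$, and disjointness of different $\hor_n$ follows from the periodicity of the lift. Finally, since $u_1^n(x'),u_2^n(x')\to 0$ uniformly in $x'$ as $n\to+\infty$, the strips $\hor_n$ accumulate on the line $y'=0$, which in the chosen coordinates is precisely $W^s_{\loc}(\vv)\cap\In(\vv)$.

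The only delicate point is the bookkeeping of the angular variable, in particular justifying the choice of lift and the fact that consecutive sheets $n$ and $n+1$ do give rise to genuinely disjoint components of the preimage in $\quadr_\vv$, rather than being glued across the identification $x\sim x+2\pi$. This is where the explicit form of $\eta$ and the hypothesis that $\quadr_\vv$ has angular width strictly less than $2\pi$ are essential; once these are fixed the remaining arguments are elementary manipulations of the exponential functions $u_i^n$.
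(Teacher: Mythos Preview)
Your proof is correct and reaches the same conclusion as the paper, but by a more direct computational route. The paper argues geometrically: it decomposes the boundary of $\quadr_\ww$ into the segment on $W^u_{\loc}(\ww)$ (where $\eta^{-1}$ is undefined) and the remaining three-sided curve, then invokes Lemma~\ref{Structures}\,\eqref{helixIn} to conclude that the latter is mapped by $\eta^{-1}$ into a helix accumulating on $W^s_{\loc}(\vv)\cap\In(\vv)$. The horizontal strips arise as the regions between successive crossings of this helix with the vertical sides of $\quadr_\vv$, and the monotonicity of the horizontal boundaries is read off from the expression \eqref{eqEtaInverse} applied to the vertical sides of $\quadr_\ww$.

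You bypass the helix lemma entirely and instead unwrap the angular condition $x'-K\ln y'\in[A,B]+2\pi\ZZ$ directly, obtaining the explicit exponential boundary functions $u_1^n,u_2^n$. This is more elementary and has the advantage that it immediately yields the closed-form expressions for the maximum and minimum heights of $\hor_n$ that the paper derives separately in \eqref{maxHn} and \eqref{minHn} after the proof. The paper's approach, on the other hand, makes the role of the chirality hypothesis (P\ref{P6}) more visible, since it is encoded in Lemma~\ref{Structures}; in your argument the chirality enters only implicitly through the sign structure of $\eta$ in \eqref{eqeta}. Your bookkeeping remark about disjointness of the sheets is exactly the point that needs checking, and the inequality $B-A<2\pi$ together with the angular width of $\quadr_\vv$ being less than $2\pi$ settles it as you indicate.
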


\begin{proof}
The boundary of $\quadr_\ww$ consists of the following:
\begin{enumerate}
\item \label{baixo}
a piece of $W^u_{\loc}(\ww)\cap \Out(\ww)$ parametrised by $y=0$, $x\in[P^1_\ww-\tau, P^2_\ww+\tau]$, where $\eta^{-1}$ is not defined;
\item \label{cima}
the horizontal segment $(x,1)$ with $x\in[P_\ww^2-\tau, P_\ww^1+\tau]$;
\item\label{lados}
two vertical segments $\left(P^1_\ww-\tau, y\right)$ and $\left(P^2_\ww+\tau,y\right)$ with $y\in\left(0,1\right)$.
\end{enumerate}
Together, the components \eqref{cima} and \eqref{lados} form a continuous curve that, by the arguments of Lemma~\ref{Structures} \eqref{helixIn}, is mapped by $\eta$ into a helix on $\In^+(\vv)$, accumulating on $W^s_{\loc}(\vv)\cap \In(\vv)$.
As the helix approaches $W^s_{\loc}(\vv)$, it crosses the vertical boundaries of $\quadr_\vv$ infinitely many times.
The interior of $\quadr_\ww$ is mapped into the space between consecutive crossings, intersecting $\quadr_\vv$ in horizontal strips, as shown in Figure~\ref{strips}.
From the expression \eqref{eqEtaInverse} of $\eta^{-1}$ it also follows that the vertical boundaries of $\quadr_\ww$ are mapped into graphs of monotonically increasing functions of $x$.
\end{proof}

 Here, as in the next  two Lemmas, we are using the form of the map $\eta$ through  Lemma~\ref{Structures}, hence the result depends strongly on the chirality hypothesis (P\ref{P6}).

Denote by $\hor_n$ the strip that attains its maximum height $h_n$  at the vertex $\left(P^1_\vv+\tau,h_n \right)$ with
\begin{equation}\label{maxHn}
h_n=\ee^{(P^1_\vv-P^2_\ww+2\tau-2n\pi)/K} ,
\end{equation}
 then 
 $\lim_{n\to\infty}h_n=0$, hence the strips  $\hor_n$ accumulate on  $W^{s}_{\loc}(\vv)\cap \In(\vv)$.
 The minimum height of $\hor_n$ is given by
 \begin{equation}\label{minHn}
 m_n=\ee^{(P^2_\vv-P^1_\ww-2\tau-2n\pi)/K}
 \end{equation}
and is attained at the vertex 
 $\left(P^2_\vv-\tau,m_n\right)$.
 Moreover $n<m$ implies that $\hor_n$ lies above $\hor_m$.

\begin{figure}[ht]
\begin{center}
\includegraphics[height=7.6cm]{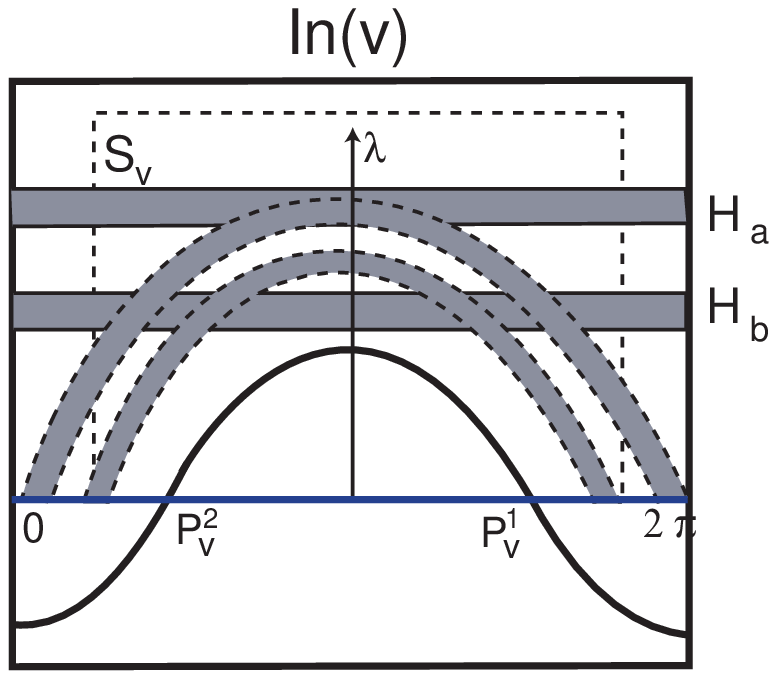}
\end{center}
\caption{\small Horseshoe strips: for $\lambda>0$ sufficiently small, the set $R_\lambda(\eta^{-1}(\quadr_\ww)\cap \quadr_\vv)  \subset \In^+(\vv)$ defines a countably infinite number of horseshoe strips in $\quadr_\vv$, that accumulate on $W^u_{\loc}(\ww)\cap \In(\vv)$.}
\label{transition1}
\end{figure}

\begin{lemma}\label{lemaEtaHorizontalStrips}
Let $\hor_n$ be one of the   horizontal strips in  $\eta^{-1}(\quadr_\ww)\cap \quadr_\vv$.
Then $\eta(\hor_n)$ is a horizontal strip in $\quadr_\ww$.
The strips  $\eta(\hor_n)$ accumulate on $W^u_{\loc}(\ww)\cap \In(\vv)$ as $n\to\infty$
and the maximum height of $\eta(\hor_n)$ is $h_n^\delta$, where $h_n$ is the maximum height of 
$\hor_n$.
\end{lemma}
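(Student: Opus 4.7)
The plan is a direct algebraic computation from the closed form $\eta(x,y) = (x - K\ln y, y^\delta)$ in \eqref{eqeta}. Using the characterisation in Lemma~\ref{lemaHorizontalStrips} together with the explicit heights \eqref{maxHn}-\eqref{minHn}, I would first write
\[
\hor_n = \left\{(x,y)\in \quadr_\vv : x - K\ln y \in [P_\ww^2-\tau+2n\pi,\, P_\ww^1+\tau+2n\pi]\right\},
\]
so that $\eta$ restricts to a diffeomorphism from $\hor_n$ onto a subset of $\quadr_\ww$.

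Next I would parametrise $\eta(\hor_n)$ in the target coordinates. Setting $(X,Y)=\eta(x,y)$ gives $y = Y^{1/\delta}$ and $x = X+2n\pi + (K/\delta)\ln Y$, so the condition $x\in [P_\vv^2-\tau, P_\vv^1+\tau]$ becomes, for each fixed $X \in [P_\ww^2-\tau, P_\ww^1+\tau]$,
\[
Y \in \Bigl[\ee^{(\delta/K)(P_\vv^2-\tau-X-2n\pi)},\; \ee^{(\delta/K)(P_\vv^1+\tau-X-2n\pi)}\Bigr] =: [v_1^{(n)}(X), v_2^{(n)}(X)].
\]
Both $v_j^{(n)}$ are smooth and strictly decreasing in $X$ (since $\delta/K > 0$), so $\eta(\hor_n)$ exactly fits the definition of a horizontal strip in $\quadr_\ww$. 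Geometrically, the horizontal boundaries of $\hor_n$---being $\eta$-preimages of the vertical boundaries of $\quadr_\ww$---are sent back to those vertical sides, while the vertical segments $\{x=\text{const}\}$ of $\hor_n$ become the graphs $Y=v_j^{(n)}(X)$. This ``horizontal into vertical, vertical into horizontal'' swap is the usual Smale-horseshoe signature.

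The maximum-height assertion then falls out directly: since $v_2^{(n)}$ is decreasing on $[P_\ww^2-\tau, P_\ww^1+\tau]$, its maximum sits at $X=P_\ww^2-\tau$ and equals
\[
v_2^{(n)}(P_\ww^2-\tau)=\ee^{(\delta/K)(P_\vv^1-P_\ww^2+2\tau-2n\pi)}=h_n^\delta,
\]
which is exactly $\eta$ evaluated at the top-right vertex $(P_\vv^1+\tau, h_n)$ of $\hor_n$. Since both $h_n^\delta$ and the minimum height $m_n^\delta$ decay exponentially to zero as $n\to\infty$, the strips $\eta(\hor_n)$ collapse onto the lower edge $\{Y=0\}$ of $\quadr_\ww$, which is a piece of $W^u_{\loc}(\ww)\cap \Out(\ww)$; I read the accumulation claim in the statement as referring to this bottom edge (since $\eta(\hor_n) \subset \Out(\ww)$, the ``$\cap\, \In(\vv)$'' in the statement looks like a typographical slip for $\cap\, \Out(\ww)$).

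I do not expect any substantive obstacle: once the closed form of $\eta$ is in hand, every step is an exponential manipulation. The only point requiring care is the sign of $\delta/K$, which is positive and is precisely what makes $v_j^{(n)}$ strictly monotonic, so that $\eta(\hor_n)$ is a genuine graph-bounded horizontal strip rather than a more complicated region. This monotonicity, and hence the whole picture, ultimately rests on the chirality hypothesis (P\ref{P6}) built into the form \eqref{eqeta} of $\eta$.
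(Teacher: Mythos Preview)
Your argument is correct and reaches the same conclusions as the paper, including the observation that the accumulation should be on $W^u_{\loc}(\ww)\cap \Out(\ww)$ rather than $\In(\vv)$. The approaches differ only in presentation: the paper argues geometrically by tracking how $\eta$ sends the boundary of $\hor_n$ to the boundary of $\eta(\hor_n)$, invoking Lemma~\ref{Structures} to conclude that the vertical sides of $\hor_n$ (pieces of the vertical sides of $\quadr_\vv$) land on a helix and hence on graphs of decreasing functions, while the horizontal sides return to the vertical sides of $\quadr_\ww$. You instead invert $\eta$ explicitly and read off the boundary functions $v_j^{(n)}(X)$ in closed form. Your route is more self-contained and yields the monotonicity and the exact maximum height by direct calculation; the paper's route is slightly more conceptual, reusing the helix machinery already in place, but both are really the same computation with the formula~\eqref{eqeta}.
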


\begin{proof}
The boundary of $\quadr_\vv$ consists of a piece of $W^s_{\loc}(\vv)$ plus a curve formed by three segments, two of which are vertical and a horizontal one.
From the arguments of Lemma~\ref{Structures} \eqref{helixOut}, it follows that the part of the boundary of $\quadr_\vv$ not contained in $W^s_{\loc}(\vv)$ is mapped by $\eta$ into a helix.

Consider now the effect of $\eta$ on the boundary of $\hor_n$.
Each horizontal boundary gets mapped into a piece of one of the vertical boundaries of $\quadr_\ww$.
The vertical boundaries of $\hor_n$ are contained in those of $\quadr_\vv$ and hence are mapped into two pieces of a helix, that will form the horizontal boundaries of the strip $\eta(\hor_n)$,
that may be written as graphs of decreasing functions of $x$.

A shown after Lemma~\ref{lemaHorizontalStrips}, the  maximum height of $\hor_n$ tends to zero, hence the strips $\eta(\hor_n)$ have the same property.
The maximum height  of $\eta(\hor_n)$ is $h_n^\delta$, attained at the point
 $\left(P^2_\ww-\tau,h_n^\delta\right)$.
\end{proof}

\begin{lemma}\label{lemaHorseshoeStrips}
For  each $\lambda>0$ sufficiently small, there exists $n_0(\lambda)$ such that for all $n\ge n_0$ the image $R_\lambda(\hor_n)$ of the   horizontal strips in  $\eta^{-1}(\quadr_\ww)\cap \quadr_\vv$
intersects $\quadr_\vv$ in a horseshoe strip. 
The strips $R_\lambda(\hor_n)$ accumulate on $W^u_{\loc}(\ww)\cap \In(\vv)$ and, when  $n\to\infty$, their maximum height tends to $\ymuw(\lambda)$,  the maximum height of  $W^u_{\loc}(\ww)\cap \In(\vv)$.
\end{lemma}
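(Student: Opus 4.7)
The plan is to propagate the convergence of $\eta(\hor_n)$ from Lemma~\ref{lemaEtaHorizontalStrips} through the transition map $\Psi^\lambda_{\ww\to\vv}$ and then read off the horseshoe strip structure from the geometry of $W^u_{\loc}(\ww)\cap\In(\vv)$. First I would sharpen Lemma~\ref{lemaEtaHorizontalStrips} to a $C^1$-statement: by the explicit form \eqref{eqeta} of $\eta$, the two horizontal boundaries of $\eta(\hor_n)\subset\quadr_\ww$ are graphs $y=g_n^\pm(x)$ on $[P_\ww^2-\tau,P_\ww^1+\tau]$ with leading form $g_n^\pm(x)=\exp(-\delta(x-x_0^\pm)/K)$, so $|g_n^\pm|\le h_n^\delta$ and $|(g_n^\pm)'|=(\delta/K)|g_n^\pm|\le (\delta/K)h_n^\delta$. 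Hence both boundaries tend to zero in $C^1$, and the whole boundary of $\eta(\hor_n)$ converges in Hausdorff distance to the circle $W^u_{\loc}(\ww)\cap\Out(\ww)=\{y=0\}$.

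Since $\Psi^\lambda_{\ww\to\vv}$ is a $C^3$-diffeomorphism that by construction sends $\{y=0\}\cap\Out(\ww)$ to $W^u_{\loc}(\ww)\cap\In(\vv)=\mathrm{graph}(\wuw(\cdot,\lambda))$, the image $R_\lambda(\hor_n)=\Psi^\lambda_{\ww\to\vv}(\eta(\hor_n))$ is confined to a $C^1$-neighbourhood of $\mathrm{graph}(\wuw)$ whose width shrinks to $0$ with $n$. The two horizontal boundaries of $R_\lambda(\hor_n)$ are then graphs of smooth functions $u_1(x)<u_2(x)$ that converge to $\wuw$ in $C^1$, while its two vertical boundaries are images of the short segments $\{x=P_\ww^j\mp\tau\}\times[m_n^\delta,h_n^\delta]$, $j=1,2$, whose second coordinates are close to $\wuw(P_\vv^j\mp\tau,\lambda)$. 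Since $\wuw(P_\vv^j,\lambda)=0$ with $\wuw'(P_\vv^j,\lambda)\ne 0$, the value $\wuw(P_\vv^j\mp\tau,\lambda)$ is strictly negative, so for $n\ge n_0(\lambda)$ the two vertical sides of $R_\lambda(\hor_n)$ lie entirely in $\In^-(\vv)$ and do not meet $\quadr_\vv$.

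It follows that $R_\lambda(\hor_n)\cap\quadr_\vv$ is bounded exclusively by pieces of the graphs of $u_1,u_2$ and arcs of $W^s_{\loc}(\vv)\cap\In(\vv)=\{y=0\}$. Applying the implicit function theorem to the $C^1$-small perturbations $u_1,u_2$ of $\wuw$ -- which vanishes transversely at $P_\vv^1$ and $P_\vv^2$ -- produces four transverse zeros $a_2<a_1<b_1<b_2$ with $u_2(a_2)=u_2(b_2)=0$ and $u_1(a_1)=u_1(b_1)=0$; because $u_1<u_2$ on the common interior the inner zeros are nested inside the outer ones, which is exactly the definition of a horseshoe strip. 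The Hausdorff convergence $R_\lambda(\hor_n)\to\mathrm{graph}(\wuw)$ then simultaneously gives the accumulation of the strips on $W^u_{\loc}(\ww)\cap\In(\vv)$ and the limit $\lim_{n\to\infty}\max u_2=\max\wuw=\ymuw(\lambda)$.

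The main obstacle I anticipate is upgrading the convergence from $C^0$ to $C^1$, which is what makes the implicit function theorem applicable in the third step; this relies on the explicit logarithmic form of $\eta$ in \eqref{eqeta}, and therefore on the chirality hypothesis (P\ref{P6}). A secondary technicality is the dependence $n_0=n_0(\lambda)$: because $\wuw\equiv 0$ at $\lambda=0$, the negative offset $\wuw(P_\vv^j\mp\tau,\lambda)$ is of order $\lambda$, while $h_n^\delta$ decays exponentially in $n$, forcing $n_0(\lambda)\to\infty$ as $\lambda\to 0^+$.
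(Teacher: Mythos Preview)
Your argument is correct and follows essentially the same strategy as the paper: push the fact that $\eta(\hor_n)$ collapses onto $y=0$ in $\Out(\ww)$ through the diffeomorphism $\Psi^\lambda_{\ww\to\vv}$, so that the image collapses onto the graph of $\wuw$ in $\In(\vv)$, and then observe that the short vertical sides land in $\In^-(\vv)$ while the long horizontal sides cross $y=0$ twice. The presentational differences are minor. The paper detects where the vertical sides go by comparing with the graph of $\wsv$ \emph{in $\Out(\ww)$} (points below $\wsv$ are in $W^-$ and hence map to $\In^-(\vv)$), whereas you detect this \emph{in $\In(\vv)$} by noting that their images lie near points of $\mathrm{graph}(\wuw)$ with negative height; these are equivalent by the very definition of $\wsv$. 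The paper then reads off the two zeros of each horizontal boundary by a sign-change/intermediate-value argument, while you use $C^1$-convergence to $\wuw$ and the implicit function theorem at the transverse zeros $P_\vv^1,P_\vv^2$. Your version is slightly more detailed and gives the nesting $[a_1,b_1]\subset[a_2,b_2]$ for free; the paper's version is more elementary since it only needs $C^0$-convergence. Either way the conclusion, the accumulation statement, and the observation $n_0(\lambda)\to\infty$ as $\lambda\to 0^+$ all follow.
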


\begin{proof}
The curve $W^s_{\loc}(\vv)\cap \Out(\ww)$ is the graph of the function $\wsv(x,\lambda)$ that is positive for $x$ outside the interval $[P^2_\ww,P^1_\ww]\pmod{2\pi}$.
In particular, $\wsv(P^2_\ww-\tau,\lambda)>0$ and $\wsv(P^2_\ww+\tau,\lambda)>0$ for small 
$\tau>0$.
Therefore there is a piece of the vertical boundary of $\quadr_\ww$ that lies below $W^s_{\loc}(\vv)\cap \Out(\ww)$, consisting of the two segments $(P^2_\ww-\tau,y)$ with $0<y<\wsv(P^2_\ww-\tau,\lambda)$ and $(P^1_\ww+\tau,y)$ with $0<y<\wsv(P^1_\ww+\tau,\lambda)$.
For small $\lambda>0$, these segments are mapped by $\Psi_{\ww\to\vv}^\lambda$ inside $\In^-(\vv)$.

Let $n_0$ be such that the maximum height of $\eta(\hor_{n_0})$ is less than the minimum of 
$$\wsv(P^2_\ww-\tau,\lambda)>0 \qquad \text{and} \qquad \wsv(P^2_\ww+\tau,\lambda)>0.$$
Then for any $n\ge n_0$ the vertical sides of $\eta(\hor_{n})$ are mapped  by $\Psi_{\ww\to\vv}^\lambda$ inside $\In^-(\vv)$.
The horizontal boundaries of $\eta(\hor_n)$ go across $\quadr_\ww$, so writing them  them as graphs of $u_1(x)<u_2(x)$, there is an interval where the second coordinate of $\Psi_{\ww\to\vv}^\lambda(x,u_j(x))$ is more than $\ymuw(\lambda)>0$, the maximum height of $W^u_{\loc}(\ww)\cap \In^+(\vv)$. 
Since the second coordinate of $\Psi_{\ww\to\vv}^\lambda(x,u_j(x))$ changes sign  twice, then it equals zero at two points, hence $R_\lambda(\hor_n)\cap \quadr_\vv$ is a horseshoe strip.

We have shown that the maximum height of $\eta(\hor_n)$ tends to zero as $n\to\infty$, hence the maximum height of $R_\lambda(\hor_n)=\Psi_{\ww\to\vv}^\lambda(\eta(\hor_n))$ tends to 
$\ymuw(\lambda)$.
\end{proof}

\subsection{Regular Intersections of Strips}
We now discuss  the global dynamics near the Bykov cycle.
The structure of the non-wandering set near the network  depends on the geometric properties of the intersection of $\hor_n$ and $R_\lambda(\hor_n)$.

Let $A$ be a  horseshoe strip and $B$ be a horizontal  strip in $\quadr_\vv$. 
We say that $A$ and $B$ \emph{intersect regularly} if $A\cap B \neq \emptyset$ and each one of the horizontal boundaries of $A$ goes across
each one of   the horizontal boundaries of $B$. 
Intersections that are  neither empty nor regular, will be called  \emph{irregular}.

If the  horseshoe strip $A$ and the  horizontal  strip $B$ intersect regularly, then $A\cap B$ has at least two connected components, see Figure \ref{transition1}.
In this and the next subsection, we will find that the horizontal strips $\hor_n$ across $\quadr_\vv$ may intersect $R_\lambda(\hor_m)$ in the three ways: empty, regular and irregular, but there is an ordering for the type of intersection,  as shown in Figure \ref{new2}. 

\begin{lemma}
\label{Novo2}
For any given fixed $\lambda>0$ sufficiently small, there exists $N(\lambda) \in \NN$ such that for all $m,n>N(\lambda)$, the  horseshoe strips $R_\lambda(\hor_m)$ in $\quadr_\vv$  intersect each one of the horizontal strips $\hor_n$ regularly.
\end{lemma}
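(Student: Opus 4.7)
The plan is to exploit the opposing limits of the two strip families: by Lemma~\ref{lemaHorizontalStrips} and~\eqref{maxHn}, as $n\to\infty$ the horizontal strip $\hor_n$ collapses uniformly onto the segment $W^s_{\loc}(\vv)\cap \quadr_\vv\subset\{y=0\}$, whereas by Lemma~\ref{lemaHorseshoeStrips} for each fixed small $\lambda>0$ the horseshoe strips $R_\lambda(\hor_m)$ accumulate on the graph of $\wuw(\cdot,\lambda)$, a bump over $[P_\vv^2,P_\vv^1]$ with positive maximum $\ymuw(\lambda)>0$. The $x$-axis meets this bump transversally at $P_\vv^1$ and $P_\vv^2$, so all sufficiently close bump-like curves will continue to meet nearby thin horizontal strips in the same regular pattern. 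Establishing this for large $m,n$ is the content of the lemma.

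The first step is to upgrade the accumulation in Lemma~\ref{lemaHorseshoeStrips} to a $C^0$ statement for each of the two horizontal boundaries $u_1^m\le u_2^m$ of the horseshoe strip $R_\lambda(\hor_m)$. By Lemma~\ref{lemaEtaHorizontalStrips} the strip $\eta(\hor_m)\subset \quadr_\ww$ has maximum height $h_m^\delta\to 0$, hence converges in Hausdorff distance to the segment $W^u_{\loc}(\ww)\cap\quadr_\ww$ on the $x$-axis of $\Out(\ww)$. Applying the smooth diffeomorphism $\Psi_{\ww\to\vv}^\lambda$ gives uniform convergence of $R_\lambda(\hor_m)=\Psi_{\ww\to\vv}^\lambda(\eta(\hor_m))$ to $\Psi_{\ww\to\vv}^\lambda(W^u_{\loc}(\ww)\cap\Out(\ww))=W^u_{\loc}(\ww)\cap\In(\vv)$, i.e.\ to the graph of $\wuw$. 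Consequently, there exists $M_1(\lambda)\in\NN$ such that for all $m\ge M_1(\lambda)$ each $u_i^m$ is defined on a sub-interval $[a_i^m,b_i^m]\subset[P_\vv^2-\tau,P_\vv^1+\tau]$, vanishes at its endpoints, and exceeds $\tfrac34\ymuw(\lambda)$ at some interior point. On the other side, from~\eqref{maxHn}, $h_n\to 0$, so there is $N_2(\lambda)\in\NN$ such that for all $n\ge N_2(\lambda)$ the horizontal boundaries $v_1^n,v_2^n$ of $\hor_n$ satisfy $0<v_j^n(x)<\tfrac12\ymuw(\lambda)$ for every $x\in[P_\vv^2-\tau,P_\vv^1+\tau]$.

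Setting $N(\lambda)=\max\{M_1(\lambda),N_2(\lambda)\}$, for any $m,n>N(\lambda)$ and any $i,j\in\{1,2\}$ the continuous function $u_i^m(x)-v_j^n(x)$ is strictly negative at the endpoints $a_i^m,b_i^m$ (since $u_i^m=0<v_j^n$) and strictly positive at the interior peak of $u_i^m$ (where $u_i^m>\tfrac34\ymuw(\lambda)>v_j^n$). Two applications of the intermediate value theorem, one on each side of the peak, yield at least two transverse crossings of $u_i^m$ with $v_j^n$, so every horizontal boundary of $R_\lambda(\hor_m)$ goes across every horizontal boundary of $\hor_n$; moreover, between the two crossings of $u_1^m$ with $v_1^n$ the point $(x,u_1^m(x))$ lies inside $\hor_n$, giving $R_\lambda(\hor_m)\cap \hor_n\ne\emptyset$. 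This is the regular intersection asserted. The main obstacle is the first step: promoting the Hausdorff-type convergence of the horseshoe strip recorded in Lemma~\ref{lemaHorseshoeStrips} to uniform $C^0$-convergence of \emph{both} horizontal boundaries to the same bump $\wuw$; once this is in place, the rest is an elementary IVT argument using only positivity of $\ymuw(\lambda)$.
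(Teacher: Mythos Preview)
Your proof is correct and follows essentially the same approach as the paper: both arguments exploit that for large $m$ the horseshoe strips $R_\lambda(\hor_m)$ peak near (or above) $\ymuw(\lambda)>0$, while for large $n$ the horizontal strips $\hor_n$ lie uniformly below any such fixed level, and then an intermediate-value argument forces the crossings. The only minor difference is that the paper avoids what you call the ``main obstacle'' by citing directly from the proof of Lemma~\ref{lemaHorseshoeStrips} that for $m\ge n_0(\lambda)$ each horizontal boundary of $R_\lambda(\hor_m)$ already exceeds $\ymuw(\lambda)$ at some point; this makes the separate $C^0$-convergence step to the bump $\wuw$ unnecessary, and the paper simply takes $N(\lambda)=\max\{n_0(\lambda),n_1(\lambda)\}$ with $n_1(\lambda)$ chosen so that $h_n<\ymuw(\lambda)$.
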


\begin{proof}
From Lemma~\ref{lemaHorseshoeStrips} we obtain $n_0(\lambda)$ such that all $R_\lambda(\hor_m)$ with $m\ge n_0(\lambda)$ are horseshoe strips and their lower horizontal boundary has maximum height bigger than  the maximum height  $\ymuw(\lambda)$ of $W^u_{\loc}(\ww)\cap \In(\vv)$.

On the other hand, since the strips $\hor_n$ accumulate uniformly on $W^s_{\loc}(\vv)\cap \In(\vv)$, with their maximum height $h_n$ tending to zero, then there exists $n_1(\lambda)$ such that 
$h_n<\ymuw(\lambda)$ for all $n\ge n_1(\lambda)$. Note that  the $h_n$ do not depend on $\lambda$. 
Therefore, for $m,n>N(\lambda)=\max\{n_0(\lambda),n_1(\lambda)\}$ both horizontal boundaries of $R_\lambda(\hor_m)$ go across the two horizontal boundaries of $\hor_n$.
\end{proof}

The constructions of this section also hold for the backwards return map $R^{-1}$ with analogues  to Lemmas~\ref{lemaHorizontalStrips}, \ref{lemaEtaHorizontalStrips}, \ref{lemaHorseshoeStrips} and \ref{Novo2}.

Generically, for $n>N(\lambda)$ each horizontal strip $\hor_n$ intersects $R_\lambda(\hor_n)$ in two connected components. Thus the dynamics of  points whose trajectories always return to $\In(\vv)$ in $\hor_n$ may be coded by a full shift on two symbols, that describe which component is visited by the trajectory on each return to $\hor_n$.
Similarly, trajectories that return to $\quadr_\vv$ inside $\hor_n\cup\cdots\cup\hor_{n+k}$ may be coded by a full shift on $2k$ symbols. As $k\to\infty$, the strips $\hor_{n+k}$ approach $W^s_{\loc}(\vv)\cap \In(\vv)$ and the number of symbols tends to infinity.
We have recovered  the horseshoe dynamics described in assertion~\eqref{item5} of Theorem~\ref{teorema T-point switching}.

The regular intersection of Lemma~\ref{Novo2} implies the existence of an $R_\lambda$-invariant subset in $\eta^{-1}(\quadr_\ww)\cap \quadr_\vv$, 
the Cantor set of initial conditions:
$$
\Lambda=\bigcap_{j\in\ZZ} \bigcup_{m,n\ge N(\lambda)} \left({R_\lambda}^j(\hor_m)\cap\hor_n\right),
$$
 where the return map to $\eta^{-1}(\quadr_\ww)\cap \quadr_\vv$ is well defined in forward and backward time, for arbitrarily large times. 
We have shown here that the map $R_\lambda$ restricted to this set is semi-conjugate to a full shift over a countable alphabet.
Results of \cite{ACL NONLINEARITY,LR} show that the first return map is hyperbolic in each horizontal strip, implying the full conjugacy to a shift. The time of return of points in $\hor_j$ tends to $+\infty$, as $j \to +\infty$. 
The set $\Lambda$ depends strongly on the parameter $\lambda$,
 in the next subsection we discuss its bifurcations when $\lambda$ decreases to zero.

\subsection{Irregular intersections of strips}
The horizontal strips $\hor_n$ that comprise $\eta^{-1}(\quadr_\ww)\cap\quadr_\vv$ do not depend on the bifurcation parameter $\lambda$, as shown in Lemmas~\ref{lemaHorizontalStrips} and \ref{lemaEtaHorizontalStrips}.
This is in contrast with the strong dependence on $\lambda$ shown by the first return of these points to $\quadr_\vv$ at the horseshoe strips $R_\lambda(\hor_n)$.
In particular, the values of $n_0(\lambda)$ (Lemma~\ref{lemaHorseshoeStrips}) and $N(\lambda)$ (Lemma~\ref{Novo2}) vary with the choice of $\lambda$.
For  a small fixed $\lambda>0$ and for $m,n\ge N(\lambda)$ we have shown that $\hor_n$ and 
$R_\lambda(\hor_m)$ intersect regularly.

The next result describes the bifurcations of these sets when $\lambda$ decreases.
These global bifurcations have been  described by Palis and Takens in \cite{PT} in a different context, where the horseshoe strips are translated down as a parameter varies.
In our case,  when $\lambda$ goes to zero the horseshoe strips  are flattened into the common  invariant two-dimensional manifoldsof $\vv$ and $\ww$.

\begin{proposition}
\label{regular_intersections}
Given $\lambda_3>0$ sufficiently small, 
there exist $\lambda_1< \lambda_2< \lambda_3 \in \RR^+$,
a horizontal strip  $\hor_a $ across $\quadr_\vv \subset \In^+(\vv)$ and  $b_0>a$ such that for any $b>b_0$ the horizontal strips $\hor_a $ and $\hor_b$ satisfy:
\begin{enumerate}
\item	 \label{reg1}
for $\lambda=\lambda_3$
the sets $\hor_i$ and $R_{\lambda_3}(\hor_j)$ intersect regularly for $i,j\in \{a,b\}$;
\item  \label{reg2}
for $\lambda=\lambda_2$
the intersection $\hor_a \cap R_{\lambda_2}(\hor_a )$ is irregular;
\item  \label{reg4}
for $\lambda=\lambda_1$
the sets $\hor_a$ and $R_{\lambda_1}(\hor_a )$ do not intersect at al;
\item	 \label{reg5}
for $\lambda=\lambda_1$ and $\lambda=\lambda_2$
the set $R_{\lambda}(\hor_b)$ intersects both 
 $\hor_b$ and  $\hor_a$ regularly.
\end{enumerate}
\end{proposition}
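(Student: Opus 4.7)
My approach would be to track how the horseshoe strips deform as $\lambda$ decreases from $\lambda_3$, exploiting that $\eta$ is $\lambda$-independent while $\Psi_{\ww\to\vv}^\lambda$ converges to the identity as $\lambda\to 0$.

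For part (\ref{reg1}), I would fix $\lambda_3>0$ small enough that Lemma~\ref{Novo2} applies and supplies an integer $N(\lambda_3)$, then take $a>N(\lambda_3)$ and $b_0>\max\{a,N(\lambda_3)\}$. For any $b>b_0$, the regularity of the four intersections $\hor_i\cap R_{\lambda_3}(\hor_j)$ with $i,j\in\{a,b\}$ is immediate from Lemma~\ref{Novo2}.

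For parts (\ref{reg2}) and (\ref{reg4}) I would study $R_\lambda(\hor_a)$ as $\lambda$ is decreased. Since $\eta$ does not depend on $\lambda$, the strip $\eta(\hor_a)\subset\quadr_\ww$ is fixed with maximum height $h_a^\delta$, while the curve $y=\wsv(x,\lambda)$ in $\Out(\ww)$ flattens toward $y=0$; hence for sufficiently small $\lambda$ the strip $\eta(\hor_a)$ lies entirely in $W^+$, and because $\Psi_{\ww\to\vv}^\lambda\to\mathrm{id}$ the image $R_\lambda(\hor_a)$ approaches $\eta(\hor_a)$ in position. A direct computation from (\ref{maxHn}) and (\ref{minHn}) shows that $h_a^\delta<m_a$ once $a$ is large enough, so this limiting strip lies strictly below $\hor_a$; there thus exists $\lambda_1>0$ with $R_{\lambda_1}(\hor_a)\cap\hor_a=\emptyset$, proving (\ref{reg4}). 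An intermediate-value argument applied to a function measuring how the boundaries of $R_\lambda(\hor_a)$ cross those of $\hor_a$ then produces $\lambda_2\in(\lambda_1,\lambda_3)$ at which the intersection is non-empty but fails the transverse crossing condition, giving (\ref{reg2}).

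For part (\ref{reg5}) I would exploit the fact that, for $b\gg a$, the strip $\eta(\hor_b)$ lies at heights of order $h_b^\delta\ll h_a^\delta$ in $\quadr_\ww$, so its image under $\Psi_{\ww\to\vv}^\lambda$ remains well approximated by Lemma~\ref{lemaHorseshoeStrips}: $R_\lambda(\hor_b)$ is a genuine horseshoe strip modelled on the fixed curve $W^u_\loc(\ww)\cap\In(\vv)$, with maximum height tending to $\ymuw(\lambda)$. Choosing $b_0$ large enough that, for every $b>b_0$ and every $\lambda\in\{\lambda_1,\lambda_2\}$, the straddling condition $m_b^\delta<\ymsv(\lambda)<h_b^\delta$ and the height inequality $h_b<\ymuw(\lambda)$ required by the criterion in the proof of Lemma~\ref{Novo2} both hold, the regularity of $R_\lambda(\hor_b)\cap\hor_b$ and of $R_\lambda(\hor_b)\cap\hor_a$ follow from the same crossing argument as in that lemma. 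The main obstacle I foresee is precisely this step: one must verify regular intersection with the \emph{fixed} strip $\hor_a$ uniformly in $b>b_0$, at the very parameter values at which $R_\lambda(\hor_a)\cap\hor_a$ has just degenerated. This demands a quantitative coordination between $b_0$ and the scales $\ymsv(\lambda_i)$ and $\ymuw(\lambda_i)$, using that the deformation of $R_\lambda(\hor_b)$ is governed by the robust curve $W^u_\loc(\ww)\cap\In(\vv)$, while $R_\lambda(\hor_a)$ sees the crossing of $\eta(\hor_a)$ with $\wsv$ at the much larger scale $h_a^\delta$.
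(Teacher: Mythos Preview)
Your argument follows the paper's almost exactly: Lemma~\ref{Novo2} for part~(\ref{reg1}); the inequality $h_a^\delta<m_a$ for $a$ large (so that $R_0(\hor_a)\cap\hor_a=\emptyset$) together with continuity in $\lambda$ for parts~(\ref{reg4}) and~(\ref{reg2}). For part~(\ref{reg5}) the paper is more direct than you anticipate: once $\lambda_1,\lambda_2>0$ have been fixed it simply sets $b_0=N(\lambda_1)$ and invokes Lemma~\ref{Novo2} once more, so the ``quantitative coordination'' and straddling conditions on $\ymsv$ you worry about are already absorbed into $N(\lambda_1)$ and need not be checked separately.
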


\begin{figure}
\begin{center}
\includegraphics[width=14cm]{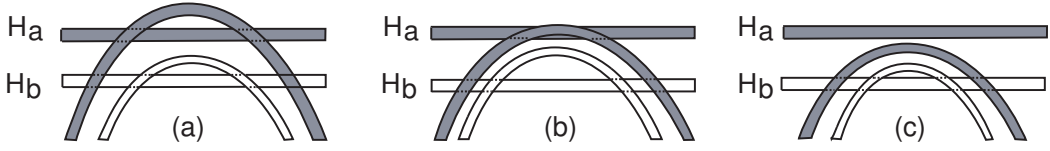}
\end{center}
\caption{\small Strips in Proposition~\ref{regular_intersections}:  (a) $\lambda=\lambda_3$; (b) $\lambda=\lambda_2$; (c) $\lambda=\lambda_1$.
The position of the horizontal strips $\hor_a$ and $\hor_b$ does not depend on $\lambda$.
The maximum height of  the  horseshoe strips $R_\lambda(\hor_a)$ and $R_\lambda(\hor_b)$ decreases 
when $\lambda$ decreases, and the suspended  horseshoe in $R_\lambda(\hor_a)\cap \hor_a$ is destroyed.
}
\label{new2}
\end{figure}

\begin{proof}
As we have remarked before, the horizontal strips $\hor_n$ do not depend on the parameter $\lambda$. 
In particular, they are well defined for $\lambda=0$. 
Their image by the return map $R_0$ is no longer a horseshoe strip,  it is a horizontal strip across $\quadr_\vv$.
Since  for $\lambda=0$ the map $\Psi_{\ww \rightarrow \vv}^0$ is the identity (see \ref{subsecInvariantManifolds}) and
the network is asymptotically stable, it follows that the maximum height of $R_0(\hor_n)$ is no more than that of $\eta(\hor_n)$.
 
From  Lemma~\ref{lemaEtaHorizontalStrips} and the expressions \eqref{maxHn} and \eqref{minHn} for the maximum,  $h_n$, and minimum $m_n$, height of $\hor_n$,  we get that the
maximum height $h_n^\delta$ of  $\eta(\hor_n)$ is less than $m_n$ if
$$
L_n=\delta(P^1_\vv-P^2_\ww) + 2\tau(\delta-1)-2n\pi (\delta-1) < P^2_\vv-P^1_\ww .
$$
Since $\lim_{n\to\infty} L_n=-\infty$, there is $n_2$ such that $R_0(\hor_n)\cap \hor_n=\emptyset$,
for every $n>n_2$.
Therefore, since $R_\lambda$ is continuous on $\lambda$, for each $n>n_2$ there is $\lambda_\star(n)>0$ such that the maximum height of  $R_{\lambda(n)}(\hor_n)$ is less than that of $\hor_n$.
 
Let  $a=\max\{n_1,N(\lambda_3)\}$ for  $N(\lambda_3)$ from Lemma~\ref{Novo2}.
Then assertion~\eqref{reg1} is true for this $\hor_a$ and for any $\hor_n$ with $n>a$, by  Lemma~\ref{Novo2}. 
We obtain assertion~\eqref{reg4}  by taking $\lambda_1=\lambda_\star(a)$.
Assertion~\eqref{reg2} follows from the continuous dependence of $R_\lambda$ on $\lambda$.
Assertion~\eqref{reg5} holds for any $\hor_b$ with $b>N(\lambda_1)=b_0$ by Lemma~\ref{Novo2}.
\end{proof}

\subsection{Proof of Theorem~\ref{newhouse}}
In particular, Proposition~\ref{regular_intersections} implies that  there exists $ d \in[\lambda_2, \lambda_3)$ such that at $\lambda=  d $, the lower horizontal boundary of $R_\lambda(\hor_a)$ is tangent to the upper horizontal boundary of $\hor_a$. 
Analogously, there exists  $ c \in(\lambda_1, \lambda_2]$ such that at $\lambda=  c $, the upper horizontal boundary of $R_\lambda(\hor_a)$ is tangent to the lower horizontal boundary of $\hor_a$. 
There are infinitely many values of $\lambda$ in $[ c , d ]$ for which the map $R_\lambda$ has a homoclinic tangency associated to a periodic point see  \cite{ PT, YA}.

The rigorous  formulation of Theorem~\ref{newhouse} consists of Proposition~\ref{regular_intersections} , with $\Delta_1= [ c , d ] $ as in the remarks above.
Since Proposition~\ref{regular_intersections} holds for any $\lambda_3>0$, it may be applied again with $\lambda_3$ replaced by $\lambda_1$, and 
the argument may be repeated recursively to obtain a sequence of disjoint intervals $\Delta_n$.
\Qed

\subsection{Proof of Corollary~\ref{Consequences}}
The  $\lambda$ dependence of the position of the horseshoe strips is not a translation in our case, but after Proposition~\ref{regular_intersections} the constructions  of Yorke and Alligood \cite{YA} and of Palis and Takens \cite{PT} can be carried over.
Hence, when the parameter
$\lambda$ varies between two consecutive regular intersections of strips, the bifurcations of the first return map can be described as the one-dimensional parabola-type  map in \cite{PT}.
Following   \cite{PT, YA}, the bifurcations for $\lambda\in[\lambda_1,\lambda_3]$ are:
  \begin{itemize}
 \item
for $\lambda> d $:  the restriction of the map $R_{\lambda_3}$ to the non-wandering set on $\hor_a$ is conjugate to the Bernoulli shift of two symbols  and it no longer bifurcates as $\lambda$ increases. 
 \item  \label{fixedPt}
 at $\lambda \in (c , d ) $: a  fixed point with multiplier equal to $\pm 1$ appears at a tangency of the horizontal boundaries. 
It undergoes a period-doubling bifurcation. A cascade of period-doubling bifurcations leads to chaotic dynamics which alternates with stability windows and the bifurcations stop at $\lambda= d $;
  \item 
at $\lambda< c $: trajectories with initial conditions in $\hor_a$ approach the network and might be attracted to another basic set.
 \end{itemize}
  \Qed
  
  \subsection{Proof of Corollary~\ref{Sinks}}
If the first return map $R_\lambda$ is area-contracting, then the fixed point that appears  for 
$\lambda \in (c , d )$ is attracting for  the parameter in an open interval.
 
This attracting  fixed point bifurcates to a sink of period 2 at a bifurcation parameter  $\lambda> c $ close to $ c $.
 This stable orbit  undergoes a second flip bifurcation, yielding an orbit of period 4. This process continues to an accumulation point in parameter space at which attracting orbits of period $2^k$ exist, for all $k\in \NN$.  
 This completes the proof of Corollary~\ref{Sinks} .
\Qed
  
 Finally, we describe a setting in which the fixed point that appears  for 
$\lambda \in (c , d )$ can be shown to be attracting.
Recall that the set  $W^u_{\loc}(\ww)\cap \In(\vv)$ is  the graph  of  $y=\wuw(x,\lambda)$.
Suppose the transition map $\Psi_{\ww \rightarrow \vv}^\lambda:\Out(\ww)\longrightarrow \In(\vv)$ is given by $\Psi_{\ww \rightarrow \vv}^\lambda(x,y)=\left(x,y+\wuw(x,\lambda)\right)$, 
which is consistent with Section~ \ref{subsecInvariantManifolds}.
Since 
$$ 
D \Psi_{\ww \rightarrow \vv}^\lambda(x,y)=\left(\begin{array}{cc} 1&0\\ 
\frac{\partial\wuw}{\partial x}(x)&1\end{array}\right) 
\quad\mbox{and}\quad
D\eta(x,y)=
\left(\begin{array}{cc} 1&\dpt\frac{-K}{y}\\ \\ 0&\delta y^{\delta-1}\end{array}\right)
$$  
then $$\det D R_\lambda(x,y)= \det D \Psi_{\ww \rightarrow \vv}^\lambda\left(\eta(x,y)\right) \cdot \det D\eta(x,y)=\delta y^{\delta-1}.$$
For sufficiently small $y$ (in $\hor_n$ with sufficiently large $n$) this is less than 1, and hence $R_\lambda$ is contracting.
However, if the first coordinate of $\Psi_{\ww \rightarrow \vv}^\lambda(x,y)$ depends on $y$, then 
$ \det D \Psi_{\ww \rightarrow \vv}^\lambda\left(\eta(x,y)\right)$ will contain terms that depend on 
$x+K\ln y$ and a more careful analysis will be required.

\section*{Acknowledgements}
We thank the two  anonymous referees for the careful reading of the manuscript and the useful corrections and suggestions
that helped to improve the work and for providing additional references.

\end{document}